\numberwithin{equation}{section}
\newtheorem{theorem}{Theorem}
\newtheorem{claim}[theorem]{Claim}
\newtheorem{lemma}[theorem]{Lemma}
\newtheorem{definition}[theorem]{Definition}
\newtheorem{observation}{Observation}
\newtheorem{summary}{Summary}
\newcommand{\floor}[1]{\left\lfloor{#1}\right\rfloor}
\def\C{\mathcal C}
\def\B{\mathcal B}
\def\h{\mathcal H}
\def\S{\mathcal S}
\def\F{\mathcal F}
\def\E{\mathcal E}
\def\G{\mathcal G}
\title{Stability of extremal connected hypergraphs avoiding Berge-paths}
\author{D\'aniel Gerbner$^1$ \and D\'aniel T. Nagy$^1$ \and Bal\'azs Patk\'os$^{1,2}$ \and Nika Salia$^{3}$ \and M\'at\'e Vizer$^1$
\\
\small $^1$ Alfr\'ed R\'enyi Institute of Mathematics, Budapest \\
\small $^2$ Moscow Institute of Physics and Technology\\
\small $^3$ Department of Mathematics, King Fahd University of Petroleum and Minerals.}
\date{}
\begin{document}
 
\maketitle

\begin{abstract}
A Berge-path of length $k$ in a hypergraph $\h$ is a sequence $v_1,e_1,v_2,e_2,\dots,v_{k},e_k,v_{k+1}$ of distinct vertices and hyperedges with  $v_{i},v_{i+1} \in e_i$, for $i \le k$. F\"uredi, Kostochka and Luo, and independently Gy\H{o}ri, Salia and Zamora determined the maximum number of hyperedges in an $n$-vertex, connected,  $r$-uniform hypergraph that does not contain a Berge-path of length $k$  provided $k$ is large enough compared to $r$. They also determined the unique extremal hypergraph $\h_1$.

We prove a stability version of this result by presenting another construction $\h_2$ and showing that any $n$-vertex, connected, $r$-uniform hypergraph without a Berge-path of length $k$, that contains more than $|\h_2|$ hyperedges must be a sub-hypergraph of the extremal hypergraph $\h_1$, provided $k$ is large enough compared to $r$.
\end{abstract}

\section{Introduction}
In extremal graph theory, the Tur\'an number $ex(n,G)$ of a graph $G$ is the maximum number of edges that an $n$-vertex graph can have without containing $G$ as a subgraph. If a class $\G$ of graphs is forbidden, then the Tur\'an number is denoted by $ex(n,\G)$. The asymptotic behavior of the function $ex(n,G)$ is well-understood if $G$ is not bipartite. However, much less is known if $G$ is bipartite (see the survey \cite{FurSim}).
One of the simplest classes of bipartite graphs is that of paths. Let $P_k$ and $C_k$ denote the path and the cycle with $k$ edges and let $\C_{\ge k}$ denote the class of cycles of length at least $k$.

Erd\H{o}s and Gallai \cite{Er-Ga} proved that for any  $n \geq k \geq 1$, the Tur\'an number satisfies 
$ex(n,P_k)\leq \frac{(k - 1)n}{2}$. They obtained this result by first showing
that for any $n \geq k \geq 3$, $ex(n,\C_{\geq k}) \leq \frac{(k - 1)(n - 1)}{2}$. The bounds are sharp for paths if $k$ divides $n$, and sharp for cycles, if $k-2$ divides $n-1$. These are shown by the example of $n/k$ pairwise disjoint $k$-cliques for the path $P_k$, and adding an extra vertex joined by an edge to every other vertex for the class $\C_{\ge k+2}$ of cycles. Later, Faudree and Schelp \cite{FSch} gave the exact value of $ex(n,P_k)$ for every $n$.

Observe that the extremal construction for the path is not connected.   Kopylov \cite{Kopylov} and independently Balister, Gy\H ori, Lehel, and Schelp \cite{BGLS} determined the maximum number of edges $ex^{conn}(n,P_k)$ that an $n$-vertex connected graph can have without containing a path of length $k$. The stability version of these results was proved by F\"uredi, Kostochka and Verstra\"ete \cite{furedi2016stability}. To state their result, we need to define the following class of graphs. 

\begin{definition}\label{defforgraphs} For $n \ge k$ and $\frac{k}{2} > a \ge 1$ we define the graph $H_{n,k,a}$ as follows. The vertex set of $H_{n,k,a}$ is partitioned into three disjoint parts $A,B$ and $L$ such that $|A|=a$, $|B|=k-2a$ and $|L|=n-k+a$. The edge set of $H_{n,k,a}$ consists of all the edges between $L$ and $A$ and also all the edges in $A \cup B$. Let us denote the number of edges in $H_{n,k,a}$ by $|H_{n,k,a}|$.
 
\end{definition}

\begin{theorem}[F\"uredi, Kostochka, Verstra\"ete \cite{furedi2016stability}, Theorem 1.6]\label{stabgraph} Let $t\ge 2$, $n \ge 3t-1$ and $k \in \{2t, 2t+1 \}$. Suppose we have a $n$-vertex connected $P_{k-1}$-free graph $G$ with more edges than $|H_{n+1,k+1,t-1}|-n$. Then we have either

\smallskip 

$\bullet$ $k=2t$, $k\neq 6$ and $G$ is a subgraph of $H_{n,k,t-1}$, or

\smallskip 

$\bullet$  $k=2t+1$ or $k=6$, and $G \setminus A$ is a star forest for $A \subseteq V(G)$ of size at most $t-1$.

\end{theorem}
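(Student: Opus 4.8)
The plan is to reduce Theorem~\ref{stabgraph} to a stability statement for $2$-connected graphs that avoid long \emph{cycles}, prove that statement by the P\'osa rotation--extension method together with induction on the number of vertices, and then translate the resulting structure back.

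The reduction is the classical universal-vertex trick. Given a connected $n$-vertex graph $G$ with no $P_k$, form $G^{+}$ by adding one new vertex $u$ joined to every vertex of $G$. Then $G^{+}$ is $2$-connected, has $n+1$ vertices, satisfies $e(G^{+})=e(G)+n$, and contains no cycle of length at least $k+2$: any such cycle must pass through $u$, since a cycle on at least $k+1$ vertices lying inside $G$ would already contain a $P_k$, and deleting $u$ from such a cycle leaves a path on at least $k+1$ vertices, again a $P_k$, in $G$. Hence the hypothesis $e(G)>|H_{n+1,k+1,t-1}|-n$ is exactly $e(G^{+})>|H_{n+1,k+1,t-1}|$, and $|H_{n+1,k+1,t-1}|$ serves as the stability threshold for $2$-connected graphs on $n+1$ vertices with no cycle of length at least $k+2$. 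So it suffices to prove the corresponding $2$-connected stability theorem and to unwind its conclusion for $G=G^{+}-u$.

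For the $2$-connected statement I would induct on the number of vertices. First dispose of low-degree vertices: the bound is calibrated so that if $G^{+}$ has a vertex $v$ of degree at most a threshold of order $t$, then deleting $v$ --- together with a newly created cut vertex, if any appears, so as to preserve $2$-connectivity --- yields a graph that still beats the analogous bound on one fewer vertex; we finish by induction and reattach $v$, checking that a vertex of such small degree cannot destroy the target structure. Otherwise the minimum degree of $G^{+}$ is large, and Kopylov's lemma (a $2$-connected graph with minimum degree $d$ contains a cycle of length at least $\min\{2d,N\}$) guarantees that a longest cycle $C$ of $G^{+}$ is already long; applying the P\'osa rotation--extension technique to longest paths whose interiors traverse most of $C$ yields an independent set $S$ of attainable endpoints with $|N(S)|<|S|$. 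The set $A:=N(S)$ is the ``core'': the crux is to run the rotation bookkeeping sharply enough to force the size of $A$ down to its tight value (of order $t$) and to deduce that $G^{+}\setminus A$ is extremely sparse --- essentially a matching when $k$ is even and a star forest when $k$ is odd --- after which a comparison with the known extremal graph either upgrades this to ``$G^{+}$ is a subgraph of the extremal graph'' (the even case) or leaves only the weaker star-forest-type description (the odd case).

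Finally I translate back to $G=G^{+}-u$. Since $u$ is universal in $G^{+}$ while $n$ vastly exceeds the constant sizes of the non-core parts of the near-extremal description, $u$ must lie in the core $A$; deleting $u$ from that description exhibits $G$ as a subgraph of $H_{n,k,t-1}$ when $k=2t$, and shows that $G$ becomes a star forest after removing a set of size at most $t-1$ when $k=2t+1$. I expect the main obstacle to be precisely what is delicate in every Erd\H{o}s--Gallai-type stability argument: squeezing the P\'osa rotation counts to obtain the sharp size of the core, and then handling the small and boundary configurations by hand --- most notably the anomalous case $k=6$, where the clean conclusion ``$G\subseteq H_{n,k,t-1}$'' genuinely fails and has to be replaced by the star-forest description.
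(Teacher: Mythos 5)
Theorem~\ref{stabgraph} is quoted from F\"uredi, Kostochka, and Verstra\"ete~\cite{furedi2016stability} as background; the paper under review does not reprove it, so there is no in-paper argument to compare against. Judged on its own, your sketch reproduces the strategy that F\"uredi, Kostochka, and Verstra\"ete actually use: add a universal vertex $u$ to convert the connected $P_k$-free problem into a $2$-connected, $C_{\geq k+2}$-free problem on $n+1$ vertices; prove a stability form of Kopylov's circumference bound by induction on the vertex count (shedding low-degree vertices) together with P\'osa rotation--extension along a longest cycle; then delete $u$. Your reduction bookkeeping is correct --- $e(G^{+})=e(G)+n$, a cycle of length $\geq k+2$ in $G^{+}$ forces a $P_k$ in $G$, and $H_{n+1,k+2,t}-u\cong H_{n,k,t-1}$ for $u$ in the core, as one checks by counting $|A|,|B|,|L|$.

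That said, what you have written is a road map, not a proof: the rotation bookkeeping that pins the core down to size $t-1$, and the case split that separates $k=2t\neq 6$ from $k\in\{2t+1\}\cup\{6\}$, are exactly what you defer, and you say so. More seriously, you should sanity-check the edge threshold before investing in the hard part, because the statement as transcribed here appears to be off by one. For $t\geq 3$ the graph $H_{n,k,t-2}$ is connected and $P_k$-free (its longest path has $k-1$ edges), and one computes $|H_{n,k,t-2}|=|H_{n+1,k+2,t-1}|-n$, which exceeds the stated bound $|H_{n+1,k+1,t-1}|-n$ by the constant $k-2t+3>0$. Yet $H_{n,k,t-2}$ contains a clique $K_{k-t+2}$ and so is not a subgraph of $H_{n,k,t-1}$ (whose largest clique is $K_{k-t+1}$), nor does removing $\leq t-1$ vertices from it leave a star forest. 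In fact $H_{n+1,k+2,t-1}$ is exactly the natural ``second-best'' $2$-connected $C_{\geq k+2}$-free construction on $n+1$ vertices, and it is what $H_{n,k,t-2}$ becomes after adding your universal vertex $u$. So the second subscript in the threshold should presumably read $k+2$ rather than $k+1$ --- an off-by-one consistent with the original paper counting path lengths by vertices while this paper counts by edges --- and a proof along your lines, which takes the stated threshold at face value and then appeals to the $2$-connected cycle stability at the ``one level above'' threshold, would not actually close.
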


The Tur\'an numbers for hypergraphs $ex_r(n,\h)$, $ex_r(n,\mathbb{H})$ can be defined analogously for $r$-uniform hypergraphs $\h$ and classes $\mathbb{H}$ of $r$-uniform hypergraphs. Note that there are several ways how one can define paths and cycles of higher uniformity. In this paper, we consider the definition due to Berge.

\begin{definition}
A \emph{Berge-path} of length $t$  is an alternating sequence of $t+1$ distinct vertices and $t$ distinct hyperedges of the hypergraph,  $v_1, e_1, v_2, e_2, v_3, \dots, e_t, v_{t+1}$ such that $v_{i},v_{i+1} \in e_i$, for $i \in [t]$. The vertices $v_1, v_2, \dots, v_{t+1}$ are called \emph{defining vertices} and the hyperedges $e_1,e_2,\dots,e_t$ are called \emph{defining hyperedges} of the Berge-path. We denote the set of all Berge-paths of length $t$ by $\B P_t$.  

Similarly, a \emph{Berge-cycle} of length $t$ is an alternating sequence of $t$ distinct vertices and $t$ distinct hyperedges of the hypergraph,  $v_1, e_1, v_2, e_2, v_3, \dots, v_t, e_t$, such that $v_{i},v_{i+1} \in e_i$, for $i\in [t]$, where indices are taken modulo $t$. The vertices $v_1, v_2, \dots, v_{t}$ are called \emph{defining vertices} and the hyperedges $e_1,e_2,\dots,e_t$ are called \emph{defining hyperedges} of the Berge-cycle.

As these are the only cycles and paths we consider in hypergraphs, we will often omit the word Berge.
\end{definition}
 
The study of the Tur\'an numbers $ex_r(n,\B P_k)$ was initiated  by Gy\H ori, Katona and Lemons~\cite{GyoKaLe}, who determined the quantity in almost every case. Later Davoodi, Gy\H ori, Methuku and Tompkins \cite{DavoodiGMT} settled the missing case $r=k+1$. For results on 
the maximum number of hyperedges in $r$-uniform hypergraphs not containing Berge-cycles longer than $k$ see \cite{furedi2018avoiding,GLSZ} and the references therein. 

Analogously to graphs,  a hypergraph is \emph{connected}, if for any two of its vertices, there is a Berge-path containing both vertices. The connected Tur\'an numbers for an   $r$-uniform hypergraph $\h$ and class of  $r$-uniform hypergraphs $\mathbb{H}$ can be defined analogously, they are denoted by the functions $ex_r^{conn}(n,\h)$ and
$ex_r^{conn}(n,\mathbb{H})$, respectively.

To describe the extremal result and to introduce our contributions, we need the following definition that can be considered as an analogue of Definition~\ref{defforgraphs} for higher uniformity.

\begin{definition}
 For integers $n,a \ge 1$ and $b_1, \dots,b_t \ge 2$ with $n \ge 2a + \sum_{i=1}^{t}b_i$ and $t\geq 0$ let us denote by $\h_{n,a,b_1,b_2,\dots,b_t}$ the following $r$-uniform hypergraph, see Figure~\ref{Fig:H_n_a}. 
 
 \smallskip 
 
 $\bullet$  Let the vertex set of $\h_{n,a,b_1,b_2,\dots,b_t}$ be $A\cup L \cup \bigcup_{i=1}^t B_i$, where $A,B_1,B_2,\dots,B_t$ and $L$ are pairwise disjoint sets of sizes $|A|=a$, $|B_i|=b_i$ ($i=1,2,\dots,t$) and $|L|= n-a-\sum_{i=1}^tb_i$.
 
 \smallskip 
 
 $\bullet$ Let the hyperedges of $\h_{n,a,b_1,b_2,\dots,b_t}$ be
 $$
 \binom{A}{r}\cup \bigcup_{i=1}^t\binom{A \cup B_i}{r}\cup \left\{\{c\}\cup A': c\in L,A'\in \binom{A}{r-1}\right\}.
 $$
 
\end{definition}

Observe that the number of hyperedges in $\h_{n,a,b_1,b_2,\dots,b_t}$ is $$\left(n-a-\sum_{i=1}^tb_i\right)\binom{a}{r-1}+\sum_{i=1}^t\binom{a+b_i}{r}-(t-1)\binom{a}{r},$$
and in case $t=0$ the vertex set of the graph $\h_{n,a}$ consists of two sets $A$ and $L$ and the number of hyperedges is 
$$\left(n-a-\sum_{i=1}^tb_i\right)\binom{a}{r-1}+\binom{a}{r}.$$

\begin{figure}[ht]
\begin{center}
\includegraphics[width=0.5\textwidth]{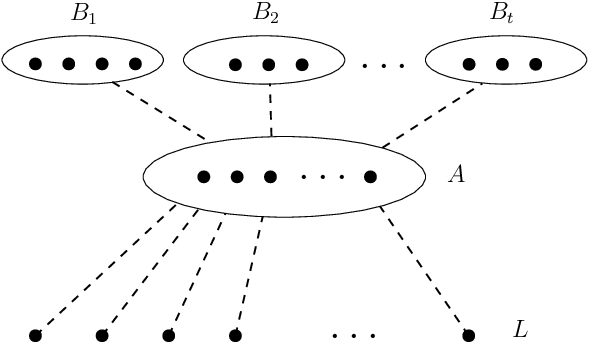}
\caption{The hypergraph $\h_{n,a,b_1,b_2,\dots,b_t}$.}
\end{center}
\label{Fig:H_n_a}
\end{figure}

Note that, if $a\le a'$ and $b_i\le b_i'$ for all $i=1,2,\dots,t$, then $\h_{n,a,b_1,b_2,\dots,b_t}$ is a sub-hypergraph of $\h_{n,a',b_1',b_2',\dots,b_t'}$. Finally, the length of the longest path in $\h_{n,a,b_1,b_2,\dots,b_t}$ is $2a-t+\sum_{i=1}^tb_i$ if $t\le a+1$, and $a-1+\sum_{i=1}^{a+1}b_i$ if $t>a+1$ and the $b_i$'s are in non-increasing order.

With a slight abuse of notation, we define $\h_{n,a}^+$ to be a hypergraph obtained from $\h_{n,a}$ by adding an arbitrary hyperedge.
Hyperedges containing at least $r-1$ vertices from $A$ are already in $\h_{n,a}$, therefore there are $r-1$ pairwise different hypergraphs that we denote by $\h_{n,a}^+$ depending on the number of vertices from $A$ in the extra hyperedge. Observe that the length of the longest path in $\h_{n,a}^+$ is one larger than in $\h_{n,a}$, in particular, if $k$ is even, then $\h_{n,\floor{\frac{k-1}{2}}}^+$ does not contain a Berge-path of length $k$.

The first attempt to determine the largest number of hyperedges in connected $r$-uniform hypergraphs without a Berge-path of length $k$ can be found in \cite{GyoriMSTV}, where  the asymptotics of the extremal function was determined. 
The Tur\'an number of Berge-paths in connected hypergraphs was determined by F\"uredi, Kostochka and Luo \cite{furedi2019connpath} for $ k \ge 4r \ge 12$ and $n$ large enough. Independently in a different range, it was also given by Gy\H{o}ri, Salia and Zamora \cite{GSZ}, who also proved the uniqueness of the extremal structure. To state their result, let us introduce the following notation: for a hypergraph $\h$ we denote by $|\h|$ the numbers of hyperedges in $\h$.

\begin{theorem}[Gy\H{o}ri, Salia, Zamora,  
\cite{GSZ}]\label{GSZ}
For all integers  $k,r$ with $k \geq 2r+13 \geq 18$ there exists $n_{k,r}$ such that if $n>n_{k,r}$, then we have
\begin{itemize}
    \item 
$ex_r^{conn}(n,\B P_k)=|\h_{n,\floor{\frac{k-1}{2}}}|$, if $k$ is odd, and 
\item $ex_r^{conn}(n,\B P_k)=|\h_{n,\floor{\frac{k-1}{2}},2}|$, if $k$ is even. 
\end{itemize}
Depending on the parity of $k$, the unique extremal hypergraph is $\h_{n,\floor{\frac{k-1}{2}}}$ or $\h_{n,\floor{\frac{k-1}{2}},2}$.
\end{theorem}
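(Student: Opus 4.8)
\emph{Lower bound.} The hypergraphs $\h_{n,a}$ with $a=\floor{\frac{k-1}{2}}$ (for odd $k$) and $\h_{n,a,2}$ with $a=\floor{\frac{k-1}{2}}=\frac{k-2}{2}$ (for even $k$) are $r$-uniform and Berge-connected: every vertex of $L$ lies in a hyperedge meeting $A$, and $A$ together with each $B_i$ spans a ``clique'' in which any two vertices share a hyperedge. By the formula for the length of the longest Berge-path recorded just before the theorem, $\h_{n,a}$ has longest Berge-path $2a=k-1$ and $\h_{n,a,2}$ has longest Berge-path $2a-1+2=k-1$; hence neither contains a $\B P_k$. Substituting these parameters into the edge-count formula for $\h_{n,a,b_1,\dots,b_t}$ yields $|\h_{n,a}|$, resp.\ $|\h_{n,a,2}|$, so $ex_r^{conn}(n,\B P_k)$ is at least the claimed value; it remains to prove the matching upper bound and uniqueness.

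\emph{Upper bound: reduction to an optimization.} Let $\h$ be an $n$-vertex connected $r$-uniform hypergraph with no $\B P_k$ and $|\h|$ maximum. The Erd\H{o}s--Gallai-type bound of Gy\H{o}ri--Katona--Lemons \cite{GyoKaLe} for arbitrary $\B P_k$-free hypergraphs gives $|\h|=O_{k,r}(n)$, hence bounded average degree. Fix a threshold $D=D(k,r)$ slightly above $\binom{a}{r-1}$, call a vertex \emph{heavy} if its degree exceeds $D$, and let $A$ be the set of heavy vertices, so $|A|=O_{k,r}(1)$. I would aim to establish, for $n$ large: $(\mathrm{i})$ $|A|\le a$ and all but $O_{k,r}(1)$ hyperedges contain at least $r-1$ vertices of $A$; $(\mathrm{ii})$ the $O_{k,r}(1)$ exceptional hyperedges all lie inside $A\cup B$ for some set $B$ with $|B|=O_{k,r}(1)$, and $A\cup B$ itself spans no $\B P_k$; $(\mathrm{iii})$ consequently $|\h|\le (n-|A|-|B|)\binom{|A|}{r-1}+e_r(A\cup B)$, where $e_r(A\cup B)$ is the number of $r$-subsets of $A\cup B$. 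Given these, maximizing the right-hand side over admissible pairs $(A,B)$ is, for each fixed $n$, a finite optimization; since the pendant term $(n-|A|-|B|)\binom{|A|}{r-1}$ dominates for large $n$, the optimum forces $|A|=a$, and then the longest-path constraint forbids any further ``clique part'' when $k$ is odd (leaving $\h_{n,a}$) while for even $k$ it leaves $\h_{n,a,2}$ as the maximizer, edging out the competitor $\h_{n,a}^{+}$ by $\binom{a}{r-2}-1$ hyperedges (the two coinciding precisely when $r=2$). This optimization is the hypergraph analogue of the graph-case computations of Kopylov \cite{Kopylov} and Balister--Gy\H{o}ri--Lehel--Schelp \cite{BGLS}, and it also delivers uniqueness.

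\emph{The core structural argument.} Facts $(\mathrm{i})$ and $(\mathrm{ii})$ would be proved by a rotation--extension (P\'osa-type) argument adapted to Berge-paths. Take a longest Berge-path $P=v_1e_1v_2\cdots e_\ell v_{\ell+1}$, $\ell\le k-1$. If some hyperedge $f\notin\{e_1,\dots,e_\ell\}$ contains $v_1$ and a suitable ``rotation partner'' $v_{i+1}$, replacing $e_i$ by $f$ yields a new longest Berge-path with endpoint $v_i$; iterating, the set $X$ of vertices realizable as an endpoint of a longest Berge-path has its Berge-neighbourhood inside $V(P)$, and between any two elements of $X$ along $P$ there must lie a defining vertex missing from $X$, which forces $|X|\le\ceil{\ell/2}\le a$. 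Using Berge-connectivity, every hyperedge is reachable from $P$; a hyperedge that avoids (or only lightly meets) $X$ and is not already ``trapped'' in a bounded neighbourhood of $P$ can be used to lengthen $P$ or to produce a longest Berge-path with a fresh endpoint, contradicting maximality. Pushing this through identifies the heavy set $A$ (essentially $X$ plus the $O_{k,r}(1)$ vertices that $P$ and $X$ can ``see'') as a set of size $\le a$ absorbing all but boundedly many hyperedges, the remainder being confined to some $\B P_k$-free set $A\cup B$ with $|B|=O_{k,r}(1)$.

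\emph{Main obstacle.} The delicate point is precisely this core argument when $\h$ is only Berge-connected and not ``$2$-Berge-connected'': a single vertex of $A$ may be a Berge-cut vertex, and, unlike for graphs, a Berge-hypergraph has no clean block--cut-tree decomposition, so one cannot simply pass to a $2$-connected block and invoke a Kopylov-type bound there. Making the rotation--extension technique output the \emph{exact} bound $|A|\le a$ (not merely $|A|=O_{k,r}(1)$) together with a genuinely bounded exceptional part --- which is what makes the subsequent optimization land on $\h_{n,a}$ / $\h_{n,a,2}$ and forces uniqueness --- is where the hypotheses $k\ge 2r+13$ and $n>n_{k,r}$ are spent, and is the step I expect to demand the most care.
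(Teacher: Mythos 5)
This statement is Theorem~\ref{GSZ}, which the paper \emph{cites} from Gy\H{o}ri--Salia--Zamora \cite{GSZ} (and, for a related range, from F\"uredi--Kostochka--Luo \cite{furedi2019connpath}) and does not prove. There is therefore no ``paper's own proof'' to compare your attempt against: the paper takes it as an input and strengthens it to the stability Theorems~\ref{deg2} and~\ref{general}.

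That said, your write-up is a high-level plan rather than a proof, and you say so yourself: items $(\mathrm{i})$--$(\mathrm{iii})$, the passage from ``some longest path endpoints'' to the exact bound $|A|\le a$, and the identification of the bounded exceptional part $B$ are all left as ``would aim to establish'' / ``pushing this through,'' and you explicitly flag the 1-connectivity/block-decomposition issue as an open obstacle. Those are precisely the substantive parts, so as a proof of Theorem~\ref{GSZ} there is a genuine gap. Your instincts do line up with the machinery the present paper develops for its stronger result: it also starts from a longest Berge-path (Claim~\ref{size_of_shadow}) but then passes to a longest Berge-\emph{cycle} of the same length (Claim~\ref{Contains_cycle}), proves a structure lemma about hyperedges leaving the cycle (Lemma~\ref{structure}), uses a ``set degree condition'' in place of your heavy-vertex threshold, identifies $A$ as $D_w$ or $D_w\cup D_{w'}$ with $|A|\le\lfloor(k-1)/2\rfloor$ via the no-two-consecutive observation (Claim~\ref{replace}), and finally removes low-degree vertices by a greedy argument (Claim~\ref{subhyp}). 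The main advantage of working with the cycle, as the paper does, is that it eliminates exactly the endpoint/cut-vertex asymmetry you identify as the hard case; if you want to turn your sketch into a proof, replacing ``longest path plus rotations'' by ``longest cycle plus the structure lemma'' is the step most likely to close the gap.
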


Our main result provides a stability version (and thus a strengthening) of Theorem~\ref{GSZ} and also an extension of Theorem~\ref{stabgraph} for  uniformity at least 3.

First, we state it for hypergraphs with the minimum degree  at least 2, and then in full generality. In the proof, the hypergraphs $\h_{n,\frac{k-3}{2},3}$ and $ \h_{n,\frac{k-3}{2},2,2}$ will play a crucial role in case $k$ is odd, while if $k$ is even, then the hypergraphs $\h_{n,\lfloor\frac{k-3}{2}\rfloor,4}, \ \h_{n,\lfloor\frac{k-3}{2}\rfloor,3,2}$ and $\h_{n,\lfloor\frac{k-3}{2}\rfloor,2,2,2}$ will be of importance, note that all of them are $n$-vertex, maximal, $\B P_k$-free hypergraphs. In both cases, the hypergraph listed first contains the largest number of hyperedges. This number gives the lower bound in the following theorem.

\begin{theorem}\label{deg2}
For any $\varepsilon >0$ there exist integers $q=q_{\varepsilon}$ and $n_{k,r}$ such that if $r\ge 3$, $k\ge(2+\varepsilon)r+q$, $n\ge n_{k,r}$ and $\h$ is a connected  $n$-vertex, $r$-uniform hypergraph with minimum degree at least 2, without a Berge-path of length $k$, then we have the following.
\begin{itemize}
    \item 
    If $k$ is odd and $|\h|>|\h_{n,\frac{k-3}{2},3}|=(n-\frac{k+3}{2})\binom{\frac{k-3}{2}}{r-1}+\binom{\frac{k+3}{2}}{r}$, then $\h$ is a sub-hypergraph of $\h_{n,\frac{k-1}{2}}$.
    \item
    If $k$ is even and 
$|\h|> |\h_{n,\lfloor\frac{k-3}{2}\rfloor,4}|=(n-\lfloor\frac{k+5}{2}\rfloor)\binom{\lfloor\frac{k-3}{2}\rfloor}{r-1}+\binom{\lfloor\frac{k+5}{2}\rfloor}{r}$, then $\h$ is a sub-hypergraph of $\h_{n,\lfloor\frac{k-1}{2}\rfloor,2}$ or $\h_{n,\floor{\frac{k-1}{2}}}^+$.
\end{itemize}
\end{theorem}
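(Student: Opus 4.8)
The plan is to prove the minimum degree $\ge 2$ statement, Theorem~\ref{deg2} as stated (the unrestricted version then follows by peeling off degree-$1$ vertices and accounting for their hyperedges), by running the strategy behind Theorem~\ref{GSZ} (as in \cite{GSZ, furedi2019connpath}) while carrying a ``deficiency'' term through the argument. First I would fix a longest Berge-path $P=v_1,e_1,\dots,e_\ell,v_{\ell+1}$ of $\h$ and set $Q=V(P)$, $L_0=V(\h)\setminus Q$. An elementary comparison of binomial sums (using convexity of $m\mapsto\binom{m}{r-1}$) shows that each of the threshold quantities $|\h_{n,\frac{k-3}{2},3}|$ and $|\h_{n,\floor{\frac{k-3}{2}},4}|$ exceeds $ex_r^{conn}(n,\B P_{k-2})$, so Theorem~\ref{GSZ} applied with $k-2$ in place of $k$ forces $\ell\in\{k-2,k-1\}$. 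Consequently $|Q|\le k$, at most $\binom{k}{r}$ hyperedges lie inside $Q$, and since $|\h|$ is of order $n\binom{\frac{k-3}{2}}{r-1}$, all but boundedly many hyperedges meet $L_0$.

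The core of the proof is to extract the ``core'' $A$ that plays the role of the dense part of the extremal construction. Since every vertex of $\h$ has degree $\ge 2$, P\'osa-type rotations of $P$ are available; combined with the elementary fact that no hyperedge meeting $L_0$ may contain an endpoint of a longest path --- and hence, after a chain of rotations, no ``possible endpoint'' either --- one shows that the trace $g\cap Q$ of any hyperedge $g$ meeting $L_0$ lies in a short sub-segment of $P$, and that two such hyperedges whose traces are far apart along $P$ can be spliced through $L_0$ into a Berge-path longer than $\ell$, a contradiction. Pushing this through yields a set $A\subseteq Q$ with $|A|\le\floor{\frac{k-1}{2}}$, a set $B$ disjoint from $A$ and of size bounded in terms of $k$ and $r$, and the property that every hyperedge of $\h$ is either contained in $A\cup B$ or is the union of an $(r-1)$-subset of $A$ with a single vertex of $V(\h)\setminus(A\cup B)$; moreover the number of hyperedges inside $A\cup B$ is again limited by $\B P_k$-freeness, since a Berge-path lying inside $A\cup B$ with both endpoints in $A$ extends, through leaf-hyperedges, to one of length $k$ unless $A\cup B$ is small.

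Given this structure, the conclusion is a short case analysis. If $k$ is odd and $|A|=\tfrac{k-1}{2}$, then a single hyperedge with two vertices outside $A$, routed through a leaf exactly as in the extremal construction, already produces a Berge-path of length $k$; hence $B=\emptyset$, every hyperedge meets $A$ in at least $r-1$ vertices, and $\h\subseteq\h_{n,\frac{k-1}{2}}$. If instead $|A|\le\tfrac{k-3}{2}$, then every vertex of $L_0$ lies in at most $\binom{\frac{k-3}{2}}{r-1}$ hyperedges, so $|\h|$ is bounded by the maximum number of hyperedges of a $\B P_k$-free hypergraph $\h_{n,a,b_1,\dots,b_t}$ with $a\le\tfrac{k-3}{2}$; a finite optimization over binomial sums (using $k\ge(2+\varepsilon)r+q$) identifies this maximum as $|\h_{n,\frac{k-3}{2},3}|$, contradicting the hypothesis. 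For $k$ even the argument is the same, except that a core of the maximal size $\floor{\frac{k-1}{2}}$ leaves just enough room for one extra book-block of size $2$, or for one arbitrary extra hyperedge, without creating a Berge-path of length $k$ --- and anything more is excluded as before --- which gives $\h\subseteq\h_{n,\floor{\frac{k-1}{2}},2}$ or $\h\subseteq\h_{n,\floor{\frac{k-1}{2}}}^+$; while $|A|\le\floor{\frac{k-3}{2}}$ again pushes $|\h|$ below $|\h_{n,\floor{\frac{k-3}{2}},4}|$.

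I expect the main obstacle to be the second step: producing the core $A$ and, above all, bounding both $|B|$ and the number of hyperedges inside $A\cup B$ by constants. This requires a Kopylov/Erd\H{o}s--Gallai-type analysis of the sub-hypergraph carried by the long path $P$ --- a chord of $P$ spanning a long arc already yields a Berge-path of length $k$ --- interlocked with the rotation arguments so that the numerous hyperedges meeting $L_0$ cannot be spread along $P$, all with deficiency bookkeeping precise enough to separate the extremal hypergraph $\h_{n,\floor{\frac{k-1}{2}}}$ from the runner-up constructions $\h_{n,\frac{k-3}{2},3}$ and $\h_{n,\floor{\frac{k-3}{2}},4}$.
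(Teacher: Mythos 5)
Your proposal takes a genuinely different route from the paper and, more importantly, leaves the hard part unfinished --- as you yourself concede in the last paragraph.

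The paper does not work directly with a longest Berge-path. Its pivotal move (Claims~\ref{size_of_shadow} and~\ref{Contains_cycle}) is to pass from a longest path to a longest Berge-\emph{cycle} $C$ of length $\ell-1 \ge k-4$, and then to exploit Lemma~\ref{structure}(i): every hyperedge outside $C$ has at most one vertex off $V(C)$. This single fact is what makes the whole structural analysis tractable --- for each $w\notin V(C)$ the sets $D_w$ (vertices of $C$ reached by $\ge 2$ non-cycle hyperedges) then become the candidate core, and the ``missing intervals'' and ``replaceable vertices'' machinery (Claims~\ref{replace}--\ref{2intervals}) pins down the embedding. Nothing analogous is available for a path: a vertex of $L_0$ may, a priori, lie in hyperedges that miss $P$ entirely, and your P\'osa-type rotations by themselves do not reproduce the cycle's rigidity. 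The paper also does \emph{not} prove Theorem~\ref{deg2} directly for minimum degree $\ge 2$; it first establishes the result under the stronger \emph{set degree condition} and then reduces by greedily deleting low-degree vertex sets (Claim~\ref{subhyp}). Your remark about ``peeling off degree-$1$ vertices'' concerns the passage from Theorem~\ref{deg2} to Theorem~\ref{general}, not the internal structure of the proof of Theorem~\ref{deg2}.

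There is also a substantive error in your second paragraph. You assert that for a hyperedge $g$ meeting $L_0$, the trace $g\cap Q$ ``lies in a short sub-segment of $P$.'' In the extremal hypergraph $\h_{n,\lfloor (k-1)/2\rfloor}$ itself, the longest Berge-path alternates between $L$ and $A$, so a hyperedge $\{c\}\cup A'$ with $c\in L$ has a trace consisting of $r-1$ vertices of $A$ that are spread across \emph{every other} position of the path --- not a short sub-segment. What is actually true, and what the paper shows via the cycle, is that such traces all fall inside a common set $A$ of size $\le\lfloor(k-1)/2\rfloor$; but identifying that set is exactly the step you have not carried out. Your step~1 (applying Theorem~\ref{GSZ} with $k-2$ to force a long path) is fine and indeed gives a slightly sharper length bound than Claim~\ref{Contains_cycle}, provided $q_\varepsilon\ge 15$ so that $k-2\ge 2r+13$; and your final case analysis by $|A|$ matches the spirit of the paper's Cases I--IV. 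But between these endpoints sits the entire content of the proof --- the construction of $A$, the bound on $|B|$, the control of hyperedges inside $A\cup B$, and the separation of the embedding into $\h_{n,\lfloor(k-1)/2\rfloor}$ versus the runner-up constructions $\h_{n,(k-3)/2,3}$, $\h_{n,(k-3)/2,2,2}$, $\h_{n,\lfloor(k-3)/2\rfloor,4}$, $\h_{n,\lfloor(k-3)/2\rfloor,3,2}$, $\h_{n,\lfloor(k-3)/2\rfloor,2,2,2}$ --- and for that you offer only the hope that ``pushing this through'' and ``deficiency bookkeeping'' will work. In its current form the proposal is a plan, not a proof.
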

Based on our analysis, we posit that there exists a small constant $q$ such that Theorem~\ref{deg2} holds for all $k\geq 2r+q$, indicating that the requirement $k\geq (2+\varepsilon)r+q$ is not the optimal condition. However, it is worth noting that the constant $2$ preceding $r$ is necessary for the existence of constructions.

Let $\mathbb{H}'_{n',a,b_1,b_2,\dots,b_t}$ be the class of hypergraphs that can be obtained from $\h_{n,a,b_1,b_2,\dots,b_t}$ for some $n\le n'$ by adding hyperedges of the form $A'_j\cup D_j$, where the $D_j$'s partition $[n']\setminus [n]$, all $D_j$'s are of size at least 2 and $A_j' \subseteq A$ for all $j$. Let us define $\mathbb{H}_{n',\floor{\frac{k-1}{2}}}^+$ analogously.

\begin{theorem}\label{general}
For any $\varepsilon >0$ there exist  integers $q=q_{\varepsilon}$ and $n_{k,r}$ such that if $r\ge 3$, $k\ge (2+\varepsilon)r+q$, $n\ge n_{k,r}$ and
$\h$ is a connected  $n$-vertex, $r$-uniform hypergraph without a Berge-path of length $k$, then we have the following.
\begin{itemize}
    \item 
    If $k$ is odd and $|\h|>|\h_{n,\frac{k-3}{2},3}|$, then $\h$ is a sub-hypergraph of some $\h'\in \mathbb{H}'_{n,\frac{k-1}{2}}$.
    \item
    If $k$ is even and 
$|\h|> |\h_{n,\lfloor\frac{k-3}{2}\rfloor,4}|$, then $\h$ is a sub-hypergraph of some $\h'\in \mathbb{H}'_{n,\lfloor\frac{k-1}{2}\rfloor,2}$ or $\mathbb{H}_{n,\floor{\frac{k-1}{2}}}^+$.
\end{itemize}
\end{theorem}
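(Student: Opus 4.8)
The plan is to reduce Theorem~\ref{general} to Theorem~\ref{deg2} by peeling off low-degree vertices. First I would observe that any vertex $v$ of degree $1$ in $\h$ lies in exactly one hyperedge $e_v$; such vertices contribute little to connectivity. The key structural step is to understand how degree-$1$ vertices are distributed. Let $D$ be the set of vertices of degree $\le 1$ in $\h$, and consider the sub-hypergraph $\h_0$ obtained by deleting all vertices in $D$ (and shrinking the hyperedges accordingly, discarding those that become too small — but one must be careful that hyperedges keep enough vertices). A cleaner approach: iteratively remove a vertex of degree $1$, one at a time, so that at step $i$ we have a connected hypergraph $\h_i$ on $n-i$ vertices; removal of a degree-$1$ vertex preserves connectivity and cannot create a Berge-path of length $k$, and it decreases the edge count by at most $1$ only if the hyperedge $e_v$ is destroyed, i.e.\ only if $e_v$ would shrink below size $r$ — otherwise we instead want to keep $e_v$ as a ``partial'' edge. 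This is exactly the mechanism encoded in the definition of $\mathbb{H}'_{n',a,b_1,\dots,b_t}$ and $\mathbb{H}^+_{n',\floor{(k-1)/2}}$: the sets $D_j$ record the clusters of deleted vertices that ``hang off'' a common core hyperedge.

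Concretely, the main steps are: (1) Show that the vertices of degree $\le 1$ can be grouped according to the hyperedges they help span, so that after deleting them we land on a connected $r$-uniform hypergraph $\h^*$ on $n^* = n - |D|$ vertices with minimum degree $\ge 2$, no Berge-path of length $k$, and with $|\h^*| \ge |\h| - (\text{number of hyperedges entirely wiped out})$. Because $n$ is huge and $\h$ has more than $|\h_{n,(k-3)/2,3}|$ (resp.\ $|\h_{n,\lfloor(k-3)/2\rfloor,4}|$) hyperedges, one checks that $|\h^*|$ still exceeds the corresponding threshold $|\h_{n^*,(k-3)/2,3}|$ (resp.\ the even analogue) — here one uses that each additional vertex in $L$ only adds $\binom{(k-1)/2}{r-1}$ hyperedges, so removing a handful of apex-like low-degree vertices cannot drop us below the bound; this needs a short monotonicity/counting computation and the hypothesis that $n^*$ is still at least $n_{k,r}$, which forces $|D|$ to be bounded (not more than, say, $2\binom{k}{r}$ vertices of degree $1$ can be attached, since each is in a hyperedge and there are not too many hyperedges — wait, rather: $|D|$ may be large, but then those vertices come in big clumps $D_j$ on shared cores, which is precisely what the target classes allow). (2) Apply Theorem~\ref{deg2} to $\h^*$ to conclude $\h^*$ is a subhypergraph of $\h_{n^*,(k-1)/2}$ (odd case) or of $\h_{n^*,\lfloor(k-1)/2\rfloor,2}$ or $\h^+_{n^*,\floor{(k-1)/2}}$ (even case). (3) Reconstruct $\h$ from $\h^*$ by re-inserting the deleted vertices: since in $\h^*$ every hyperedge either lies inside $A \cup B_i$ or is of the form $\{c\}\cup A'$ with $A'\in\binom{A}{r-1}$, each deleted degree-$\le 1$ vertex $v$ of $\h$ must have had its hyperedge $e_v$ meeting the core in a subset of $A$ (otherwise a longer path appears, or $v$ would not be degree $\le 1$ after other deletions), hence $e_v = A'_j \cup D_j$ with $A'_j\subseteq A$; grouping these recovers exactly membership in $\mathbb{H}'_{n,(k-1)/2}$ (resp.\ $\mathbb{H}'_{n,\lfloor(k-1)/2\rfloor,2}$ or $\mathbb{H}^+_{n,\floor{(k-1)/2}}$).

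The step I expect to be the main obstacle is step (3), the reconstruction: one must argue that re-adding a deleted vertex $v$ cannot attach $e_v$ to any vertex outside $A$ in the core, because such an attachment, combined with the long paths already living inside $\h_{n^*,(k-1)/2}$ (whose longest path has length $k-1$, one less than forbidden), would create a Berge-path of length $k$. This requires carefully tracking that the hyperedge $e_v$ truly is ``new'' — i.e.\ not one of the hyperedges of $\h^*$ — and that the endpoints of a near-extremal-length path in the core can be routed through $v$. A subtlety is that several deleted vertices may share a hyperedge, or the order of deletion may matter for which vertex ``owns'' a destroyed hyperedge; I would handle this by processing all degree-$\le1$ vertices simultaneously rather than iteratively, defining $D_j$ to be the vertices whose unique hyperedge, after removing core vertices, becomes the same subset of $A$, and showing $|D_j|\ge 2$ whenever the shared core-part has size $r-1$ would force the hyperedge to survive with $<r$ vertices — ensuring the $D_j$'s partition $[n]\setminus[n^*]$ with each of size $\ge 2$ as required in the definition of the classes $\mathbb{H}'$ and $\mathbb{H}^+$. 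Finally one must double-check the edge-count bookkeeping so that the strict inequality $|\h| > |\h_{n,(k-3)/2,3}|$ is not lost when passing to $\h^*$; this is a routine but essential estimate using $n \ge n_{k,r}$ large.
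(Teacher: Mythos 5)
Your high-level plan (peel, apply the stronger theorem, reconstruct) is the same in spirit as the paper's, but the actual reduction you propose has a genuine gap. The paper does not reduce Theorem~\ref{general} to Theorem~\ref{deg2}; both are derived from a common intermediate statement under a much stronger \emph{set degree condition} ($|E(X)|\ge |X|\binom{\lfloor(k-3)/2\rfloor}{r-1}$ for every small $X$), and the greedy peeling removes whole sets $S$ failing that inequality together with all of $E(S)$, not merely degree-$\le 1$ vertices. Your peeling is much weaker, and this is where your plan breaks: to stay $r$-uniform you must delete the hyperedge $e_v$ together with a degree-$1$ vertex $v$, which drops the degree of the other $r-1$ vertices of $e_v$; iterating, you will delete vertices whose degree in the original $\h$ is $2$ or more. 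Such a vertex cannot live in any $D_j$ of $\mathbb{H}'_{n,a,\dots}$ — the sets $D_j$ partition the extra vertices and each is contained in a \emph{single} added hyperedge, so every vertex there has degree exactly $1$ in the full hypergraph. A deleted vertex of degree $\ge 2$ must therefore sit in the core $L$, and for that you must prove that \emph{every} hyperedge of $\h$ containing it meets the cycle's $A$ in $r-1$ vertices. You never argue this. The paper does exactly this, separately, by invoking Lemma~\ref{structure}(i) for the removed vertices of degree $\ge 2$ and only afterwards running the ``$D_h$'' argument for the genuinely degree-$1$ vertices. Also note that if you instead peel simultaneously (as you also float) you do not reach minimum degree $\ge 2$ at all, so Theorem~\ref{deg2} is not even applicable to the result.

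Two smaller points. First, you cannot ``keep $e_v$ as a partial edge'' — Theorem~\ref{deg2} is stated for $r$-uniform hypergraphs, so the peeled hypergraph must remain $r$-uniform, and the only way to do that is to delete the whole hyperedge. Second, your worry that $|D|$ being large could push $n^*$ below $n_{k,r}$ is actually the easy part: each deletion removes at most one hyperedge while $|\h_{n,a,b}|$ drops by $\binom{a}{r-1}\ge 2$ as $n$ decreases by $1$, so the quantity $|\E|-|V|\binom{\lfloor(k-3)/2\rfloor}{r-1}$ increases and an $n''_{k,r}$ exists exactly as in the paper's Claim~\ref{subhyp}. The missing ingredient is not the counting but the structural statement that deleted vertices of degree $\ge 2$ attach to $A$ via $(r-1)$-subsets, for which you need Lemma~\ref{structure}(i) applied against a long cycle surviving in the peeled hypergraph.
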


\medskip 

\subsection*{Notation} We use standard notation. The vertex set of an $r$-uniform hypergraph $\h$ is denoted by $V(\h)$, and we denote the set of its hyperedges by $\E(\h)$. Sets of vertices that do not necessarily form a hyperedge are denoted by capital letters, while lowercase letters $u,v,x,y,z$ are used to denote vertices. Hyperedges of $\h$  are usually denoted by lowercase $h$. Hyperedges of some particular cycles are often denoted by $e$ and those of paths by $f$. For a hypergraph $\h$ and a set $S$ of its vertices, the set of hyperedges of $\h$ that contain at least one element of $S$ is denoted by $E(S)$ ($\h$ will always be clear from context). The (open) neighborhood of a vertex $v$ in $\h$ (i.e., the set of vertices $u$ different from $v$ for which there exists a hyperedge $h$ of $\h$ with $\{u,v\}\subset h$) is denoted by $N_\h(v)$ or simply $N(v)$ if $\h$ is clear from context. For two hypergraphs $\h_1$ and $\h_2$ with $V(\h_2)\subseteq V(\h_1)$, we denote by $\h_1\setminus \h_2$ the hypergraph with vertex set $V(\h_1)$ and hyperedge set $\E(\h_1)\setminus \E(\h_2)$. For two hypergraphs $\h_1$ and $\h_2$ we denote the fact that $\h_1$ is sub-hypergraph of $\h_2$ by $\h_1 \subseteq \h_2$.

\subsection*{An informal outline of the proof}

We first prove Theorem~\ref{deg2} with an additional degree constraint, a lower bound on the number of hyperedges intersecting any small set of vertices. The majority of the paper deals with this additional assumption; in the last page we show how this easily implies both of our theorems. Most of the proof is through technical claims that establish some properties of certain vertices and hyperedges, but are not interesting on their own.

Let us describe the basic idea of the proof. Observe that in $\h_{n,a,b_1,b_2,\dots,b_t}$, in the longest paths and cycles the vertices of $B_i$ form intervals, while the rest of the path or cycle alternates between $A$ and $L$. Our approach is to take a large connected $n$-vertex hypergraph $\h$ without a Berge path of length $k$, and show that it is similar to $\h_{n,a,b_1,b_2,\dots,b_t}$. In order to do that, we first show that $\h$ contains a Berge cycle of length at least $k-4$ (Claim~\ref{Contains_cycle}). Then we show that some of the vertices of this cycle can be replaced by some of the outside vertices. 
These vertices and the outside vertices form the set $L$, the intervals among the remaining vertices of the cycle form the sets $B_i$ and the rest of the vertices form $A$.

Afterwards, we have to deal with the hyperedges that are not in the copy of $\h_{n,a,b_1,b_2,\dots,b_t}$ obtained this way. We show that either there are no such hyperedges, or there are no too many such hyperedges and $\h_{n,a,b_1,b_2,\dots,b_t}$ is small. 

A sufficient condition for the property of being replaceable is shown in Claim~\ref{replace}, and afterwards it is shown that each outside vertex can replace several vertices of the cycle. We fix an outside vertex $w$, and Claim 14 shows some properties of the intervals of vertices not replaceable by $w$.

After this, we summarize what the technical claims imply about the hyperedges that are not in the above defined $\h_{n,a,b_1,b_2,\dots,b_t}$. The proof is finished by a case by case analysis, where the cases depend on the length of the longest cycle and the structure of the intervals of non-replaceable vertices.

\section{Proof of Theorem~\ref{deg2} with an additional degree constraint}
We say that an $r$-uniform  hypergraph $\h$ has the \textbf{set degree condition}, if, for any set $X$ of vertices with $|X|\le k/2$, we have $|E(X)| \ge|X|\binom{\lfloor \frac{k-3}{2} \rfloor }{r-1}$, i.e. the number of those hyperedges that are incident to some vertex in $X$ is at least $|X|\binom{\lfloor \frac{k-3}{2} \rfloor }{r-1}$. We first prove Theorem~\ref{deg2} for such hypergraphs.
To this end, we prove the following technical lemma that will be crucial at the later stage of the proof.

\begin{lemma}\label{structure}
Let $\h$ be a connected $r$-uniform hypergraph with the minimum degree at least 2 and with longest Berge-path and Berge-cycle of length $\ell-1$. Let $C$ be a Berge-cycle of length $\ell-1$ in $\h$, with defining vertices $V=\{v_1,v_2,\dots,v_{\ell-1}\}$ and defining edges $\E(C)=\{e_1,e_2,\dots,e_{\ell-1}\}$ with $v_i,v_{i+1}\in e_i$ (modulo ${\ell-1}$). Then, we have

\medskip 

(i) every hyperedge $h \in \h\setminus C$ contains at most one vertex from $V(\h)\setminus V$.

\smallskip 

(ii) If $u,v$ are not necessarily distinct vertices from $V(\h)\setminus V$, then there cannot exist distinct hyperedges $h_1,h_2\in \h\setminus \C$ and an index $i$ with $v,v_i \in h_1$ and $u,v_{i+1}\in h_2$.

\smallskip 

(iii) If there exists a vertex $v\in V(\h)\setminus V$ and there exist different hyperedges $h_1,h_2\in \h\setminus \C$ with $v,v_{i-1} \in h_1$ and $v,v_{i+1}\in h_2$, then there exists a cycle of length ${\ell-1}$ not containing $v_i$ as a defining vertex.
\end{lemma}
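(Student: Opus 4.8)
The three statements are all "extremal path/cycle" arguments: we have a longest cycle $C$ of length $\ell-1$ (which equals the circumference and also the length of the longest path), and we want to rule out configurations that would let us build either a longer path or a longer cycle (contradicting maximality), or produce another $(\ell-1)$-cycle avoiding a prescribed defining vertex. I would handle each part by an explicit rerouting argument.

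For (i), suppose some hyperedge $h\notin\E(C)$ contains two vertices $x,y\in V(\h)\setminus V$. Since $x\notin V$ has degree at least $2$, there is another hyperedge $h'\ne h$ with $x\in h'$. Now I would try to attach the "pendant" structure $y, h, x, h'$ to the cycle $C$: walk around $C$ as a Berge-path $v_1,e_1,\dots,e_{\ell-2},v_{\ell-1}$ (deleting one cycle-edge), and if one of the endpoints, say $v_1$, together with the edge $e_{\ell-1}$ we removed, can be extended — actually the cleanest route is: $C$ uses only $\ell-1$ of its $\ell-1$ defining edges and $\ell-1$ vertices; take the Berge-path $P$ on all $\ell-1$ vertices obtained by cutting $C$ at a carefully chosen place, then prepend $y,h,x,h'$ and land on a cycle-vertex, obtaining a Berge-path on $\ell+1$ vertices (length $\ell$), contradiction. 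The bookkeeping is: $h$ contributes the edge between $y$ and $x$, $h'$ contributes the edge from $x$ into the cycle; one must check $h,h'\notin\E(C)$ (true since $h\notin\E(C)$ and $h'$ contains $x\notin V$, while all $e_i$ have both endpoints... wait, $e_i$ may contain outside vertices too) — so here one actually invokes that $h'\ne h$ and that if $h'\in\E(C)$ we can instead reroute through the cycle. This is the part that needs the most care.

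For (ii) and (iii) the idea is the same "cycle surgery." In (ii), if $v,v_i\in h_1$ and $u,v_{i+1}\in h_2$ with $h_1,h_2\notin\E(C)$ distinct, then I would delete the cycle-edge $e_i$ (which joins $v_i$ and $v_{i+1}$) and splice in $v_i, h_1, v$ and $v, \dots$ — when $u=v$ this immediately gives a cycle of length $\ell$ (insert the single new vertex $v$ between $v_i$ and $v_{i+1}$ via $h_1,h_2$), contradicting maximality of the circumference; when $u\ne v$, both are outside $V$, and we get a Berge-path $u,h_2,v_{i+1},e_{i+1},\dots,e_{i-1},v_i,h_1,v$ of length $\ell$ (it uses all $\ell-1$ cycle vertices plus $u,v$, with $h_1,h_2$ as the two extra edges, all distinct from the $e_j$'s used since $h_1,h_2\notin\E(C)$), again a contradiction. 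For (iii), with $v,v_{i-1}\in h_1$ and $v,v_{i+1}\in h_2$, $h_1\ne h_2$, $h_1,h_2\notin\E(C)$: replace the two cycle edges $e_{i-1}$ (joining $v_{i-1},v_i$) and $e_i$ (joining $v_i,v_{i+1}$) by the detour $v_{i-1}, h_1, v, h_2, v_{i+1}$; this yields a closed Berge-walk on the vertex set $(V\setminus\{v_i\})\cup\{v\}$, i.e. a Berge-cycle of length $\ell-1$ that does not use $v_i$ as a defining vertex, exactly as claimed — here we do not even need a contradiction, just the construction, after checking $h_1,h_2$ are distinct from each other and from all remaining $e_j$ (immediate from $h_1,h_2\notin\E(C)$ and $h_1\ne h_2$).

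The main obstacle I anticipate is purely the edge-distinctness bookkeeping in part (i): one must ensure the two "extra" hyperedges $h$ and $h'$ are distinct from each other and from every cycle-edge $e_j$ actually used in the path, and handle the degenerate case where the second hyperedge $h'$ witnessing $\deg(x)\ge 2$ happens to be a cycle-edge or happens to also contain $y$ — in those cases one should instead use the degree-$\ge 2$ hypothesis at $y$, or reroute through the cycle. Since the minimum-degree-$2$ hypothesis is exactly what supplies the needed second edge at an outside vertex, I expect a short case split (is the witnessing edge in $\E(C)$ or not; does it contain the other outside vertex or not) to close everything. Parts (ii) and (iii) are comparatively mechanical once the surgery template from (i) is set up.
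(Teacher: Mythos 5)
Your arguments for parts (ii) and (iii) are exactly the paper's: for (ii), delete $e_i$ and splice in $h_1,v$ (and $h_2,u$) to get a longer cycle when $u=v$ or a length-$\ell$ path when $u\neq v$; for (iii), replace $e_{i-1},v_i,e_i$ with $h_1,v,h_2$. No issues there.

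Part (i) has a genuine gap. You plan to choose $h'\neq h$ through $x$ (supplied by minimum degree $2$), cut $C$ open into a Berge-path on all $\ell-1$ defining vertices, and prepend $y,h,x,h'$ so as to ``land on a cycle-vertex.'' But $h'$ need not contain any vertex of $V$: minimum degree only gives a second hyperedge at $x$, and that hyperedge may lie entirely outside $V(C)$, exactly as $h$ does. In that case nothing you have built touches the cycle and no contradiction results. Your anticipated ``degenerate cases'' ($h'\in\E(C)$, or $y\in h'$) are actually the benign ones — if $h'=e_j$ then $x$ already sits in a defining hyperedge and you can step from the cycle to $x$ in one move. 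The hard case, which the sketch never resolves, is when $x$, $y$, and every vertex of $h'$ other than $x$ all lie strictly outside $V(C)$. The paper closes this with a two-level split: first on whether one of the two outside vertices already lies in some $e_i$ (direct one-step extension gives a length-$\ell$ path), and otherwise on whether $\{u,v\}$ can still reach $C$ after deleting $h$ — if yes, attach the pendant to $C$ through a shortest such path, and if no, use connectivity of $\h$ to get a shortest path $P$ from $v$ to $C$, then minimum degree at $v$ to get a hyperedge $h'\neq h$ at $v$ which is shown (using the ``no path in $\h\setminus\{h\}$'' hypothesis) to be vertex-disjoint from $P$ and $C$ apart from $v$, extending $P$ at the $v$-end. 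It is precisely this use of connectivity — producing a connecting path, not merely a single extra edge — that your proposal omits and that the minimum-degree hypothesis alone cannot supply.
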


\begin{proof}
We prove (i) by contradiction. Suppose $h \in \h\setminus C$ contains 
two vertices from $V(\h)\setminus V$. We distinguish two cases.

\medskip 
\textbf{Case 1.}
Hyperedge $h$ contains a vertex $u\not\in V$ and a different vertex $v\in e_i\setminus V$ for some $i\le \ell-1$. Then $v_{i+1},e_{i+1},v_{i+2},\dots,v_{\ell},e_\ell,v_1,e_1,\dots,v_i,e_i,v,h,u$ is a path of length $\ell$, a contradiction. 

\smallskip 

\textbf{Case 2.}
Hyperedge $h$ contains two vertices $u$ and $v$ from $V(\h)\setminus V(C)$. We consider the hypergraph $\h'$  obtained from $\h$ by removing the hyperedge $h$.

\smallskip 

\hspace{2mm} \textbf{Case 2.1.} There is a Berge-path in $\h'$ from $\{v,u\}$ to the cycle $C$, in particular to a defining vertex of $C$. Then let $P$ be a shortest such path, let us assume $P$ is from $v$ to $v_i$. Without loss of generality we may suppose that $P$ does not contain $e_i$ as a defining hyperedge, (it is possible $P$ contains $e_{i-1}$ as a defining hyperedge). Then $u,h,P,e_i,v_{i+1},\dots,e_{i-2},v_{i-1}$ is a Berge-path of length at least $\ell$, contradicting the assumption that the longest path in $\h$ is of length $\ell-1$.

\smallskip 

\hspace{2mm} \textbf{Case 2.2.} Suppose there is no Berge-path from the vertex $v$ to the cycle $C$ in $\h'$. However by the connectivity of $\h$, there is a shortest path $P$ from $v$ to a defining vertex of $C$, say $v_i$ and it does not use any defining hyperedge of $C$ but possibly $e_{i-1}$.   
Also, $h$ is  a hyperedge of $P$. There exists a hyperedge $h'\neq h$ containing $v$, as the minimum degree is at least 2 in $\h$. Note that $h'$ is not a hyperedge of the path $P$, and even more, all vertices of $h'$ different from $v$ are not defining vertices of $P$ or $C$.  Fix a vertex $u'\in h'\setminus \{v\}$. 
Then $u',h',P,e_i,v_{i+1},\dots, e_{i-2},v_{i-1}$ is a Berge-path of length at least $\ell$, a contradiction.

\medskip 

To prove (ii), assume first that $u=v$. Then one could enlarge $C$ by removing $e_i$ and adding $h_1,v,h_2$ to obtain a longer cycle, a contradiction. Assume now $u\neq v$. Then removing $e_i$ and adding $h_1,v$ and $h_2,u$, one would obtain a path of length $\ell$, a contradiction. 

\medskip

Finally to show (iii), we can replace $e_{i-1},v_i,e_i$ in $C$ by $h_1,v,h_2$ to obtain the desired cycle.
\end{proof}

\begin{proof}[Proof of Theorem~\ref{deg2} for hypergraphs having the set degree condition]
Let $\h$ be an $n$-vertex $\B P_k$-free hypergraph with the set degree condition. Also, assume $|\h|$ is as claimed in the statement of the theorem. However, for the most part of proof, we will only use the set degree condition.

\begin{claim}\label{size_of_shadow}
Let $P$ be a longest Berge-path in $\h$ with  defining vertices $U=\{u_1, \dots, u_{\ell}\}$ and defining hyperedges $\F=\{f_1,f_2, \dots, f_{\ell-1}\}$ in this given order.  Suppose $P$ minimizes $x_1+x_\ell$ among longest Berge-paths of $\h$, where $x_i$ for $i\in [\ell]$, denotes the number of hyperedges in $\F$ incident to $u_i$. Then  the sizes of $N_{\h\setminus\F}(u_1)$ and $N_{\h\setminus\F}(u_\ell)$ are at least $ \floor{\frac{k-3}{2}}$. 
\end{claim}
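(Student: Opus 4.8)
The plan is to prove the bound for $u_1$; by symmetry the argument for $u_\ell$ is word for word the same. Write $m:=\lfloor\frac{k-3}{2}\rfloor$ and argue by contradiction, assuming $|N_{\h\setminus\F}(u_1)|\le m-1$. The first, routine, step is the observation that $N_{\h\setminus\F}(u_1)\subseteq\{u_2,\dots,u_\ell\}$: if a hyperedge $h\notin\F$ contained $u_1$ and a vertex $w\notin U$, then $w,h,u_1,f_1,u_2,f_2,\dots,f_{\ell-1},u_\ell$ would be a Berge-path of length $\ell$, contradicting the maximality of $P$. Hence every hyperedge incident to $u_1$ that is not in $\F$ is an $r$-subset of $\{u_1\}\cup N_{\h\setminus\F}(u_1)$, a set of size at most $m$, so there are at most $\binom{m-1}{r-1}$ of them. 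On the other hand, the set degree condition applied to $X=\{u_1\}$ says that $u_1$ is incident to at least $\binom{m}{r-1}$ hyperedges of $\h$. Consequently the number $x_1$ of defining hyperedges of $P$ incident to $u_1$ satisfies $x_1\ge\binom{m}{r-1}-\binom{m-1}{r-1}=\binom{m-1}{r-2}=:L$.

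Next I would turn the minimality of $x_1+x_\ell$ into a contradiction using rotations that keep the defining edge set fixed. Put $J=\{j\in[\ell-1]:u_1\in f_j\}$, so $|J|=x_1\ge L$ (and in particular $\ell-1\ge L$). For every $j\in J$ with $3\le j\le\ell-1$, the sequence
$$u_j,f_{j-1},u_{j-1},f_{j-2},\dots,f_1,u_1,f_j,u_{j+1},f_{j+1},\dots,f_{\ell-1},u_\ell$$
is a Berge-path of length $\ell-1$ whose set of defining hyperedges is again $\F$ and whose endpoints are $u_j$ and $u_\ell$ (here we use $u_1\in f_j$ and $u_{j+1}\in f_j$). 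Since $P$ minimizes $x_1+x_\ell$ among longest Berge-paths, we must have $x_j+x_\ell\ge x_1+x_\ell$, i.e. $x_j\ge x_1\ge L$. Therefore $S:=\{u_1\}\cup\{u_j:j\in J,\ 3\le j\le\ell-1\}$ is a set of $|S|\ge|J|-1\ge L-1$ pairwise distinct vertices, each incident to at least $L$ hyperedges of $\F$. Double counting incidences between $S$ and $\F$ gives
$$(L-1)L\le|S|\cdot L\le\sum_{f\in\F}|f\cap S|\le r\,|\F|=r(\ell-1)\le r(k-1).$$

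It remains to contradict this last inequality using $k\ge(2+\varepsilon)r+q$ with $q=q_\varepsilon$ chosen large. Since $m-1\ge\frac{k}{2}-3\ge(1+\tfrac{\varepsilon}{2})(r-2)$, one gets simultaneously $L=\binom{m-1}{r-2}\ge m-1\ge\frac{k}{2}-3$ and $L\ge\big(\tfrac{m-1}{r-2}\big)^{r-2}\ge(1+\tfrac{\varepsilon}{2})^{r-2}$. A short calculation then yields $(L-1)L>r(k-1)$: when $k$ is large relative to $r$ the first bound already suffices, and when $r$ is comparable to $k$ (hence large) the exponential second bound dominates the polynomial $r(k-1)$. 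The only genuinely nonroutine point is making this final estimate uniform — picking one threshold $q_\varepsilon$ that works both in the regime of bounded $r$ with $k\to\infty$ (where $(L-1)L$ is polynomial in $k$ of degree $2(r-2)\ge 2$, beating the linear $r(k-1)$) and in the regime $r=\Theta(k)$ (where $L$ is exponential in $r$). Everything else is index bookkeeping along $P$ and Pascal's identity, and the same computation applied to $u_\ell$ completes the proof.
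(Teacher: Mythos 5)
Your proof is correct, and it takes a genuinely more unified route than the paper's. The paper splits into two cases: for $r\ge 4$ it does not use the minimality of $x_1+x_\ell$ at all — it simply notes that at most $|\F|\le k-1$ of the $\ge\binom{m}{r-1}$ hyperedges through $u_1$ are defining hyperedges, and $\binom{m}{r-1}-(k-1)>\binom{m-1}{r-1}$ already forces $|N_{\h\setminus\F}(u_1)|\ge m$; the rotation/double-counting argument is reserved for $r=3$, where it is used to bound $x_1\le\sqrt{3(k-1)}$ directly (an upper bound on $x_1$ that then gets subtracted from $\binom{m}{2}$). You instead argue by contradiction for all $r$ at once: the negation gives the lower bound $x_1\ge\binom{m-1}{r-2}=L$ via Pascal's identity, the rotation transports this to all $u_j$ with $u_1\in f_j$ (this is the same rotation step as the paper's, just aimed at a different endpoint of the argument), and the incidence double-count $(L-1)L\le r|\F|\le r(k-1)$ produces the contradiction. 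What the paper's split buys is that the $r\ge 4$ case is completely elementary and does not even require the hypothesis that $P$ minimizes $x_1+x_\ell$; what your unified version buys is a single argument and a cleaner conceptual picture. The one place to keep an eye on, which you correctly flag as "the only genuinely nonroutine point," is the uniformity of the final estimate $(L-1)L>r(k-1)$ over the whole range $3\le r$, $k\ge(2+\varepsilon)r+q$; your two-regime dichotomy (polynomial bound $L\ge m-1$ when $k\ge 4r+O(1)$, exponential bound $L\ge(1+\varepsilon/2)^{r-2}$ when $k<4r+O(1)$, which forces $r$ large once $q$ is large) does close this, and is of the same flavor as the $\alpha,\beta_r$ calculation the paper carries out inside the proof of Claim~\ref{Contains_cycle}.
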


\begin{proof}[Proof of Claim~\ref{size_of_shadow}] Observe that the statement is trivially true for $r\ge 4$ and for the an arbitrary longest path, as by the set degree condition, there exist at least $\binom{\floor{\frac{k-3}{2}}}{r-1}-k+1$ hyperedges in $\h\setminus \F$ incident to $u_1$. This is strictly greater than $\binom{\floor{\frac{k-5}{2}}}{r-1}$ if $r\ge 4$ and $k\ge (2+\varepsilon)r+q$, for large enough $q$, hence $|N_{\h\setminus\F}(u_1)|>\frac{k-5}{2}$.
By the symmetry of the Berge-path we conclude the desired for the vertex $u_{\ell}$ with the same argument, finishing the proof for $r\ge 4$. 

Thus we can assume that $r=3$. Let $P$ be a longest Berge-path in $\h$, minimizing $x_1+x_{\ell}$.
First, we claim that if $u_1 \in f_i$ then $x_i \ge x_1$. Note that the Berge-path 
\begin{displaymath}
u_i, f_{i-1},u_{i-1},f_{i-2}, u_{i-2}, \dots, u_2, f_1, u_1, f_i,u_{i+1}, f_{i+1}, \dots, u_\ell, f_{\ell-1}, u_{\ell}
\end{displaymath}
is also a longest Berge-path, with the same set of defining vertices and defining hyperedges and endpoint $x_{\ell}$, hence by the minimality of the sum $x_1+x_{\ell}$, the number of  hyperedges from $\F$ incident to $u_i$ is at least $x_1$.

This means that if we consider all possible Berge-paths obtained from $P$ by the way described above (including itself), then the number of pairs $(u,f)$, where $u \in U$, $f \in \F$ and $u \in f$, is at least $x_1^2$. On the other hand, this number is upper bounded by $r|\F|=3|\F|= 3(\ell-1) $, hence we have  $x_1^2 \leq 3(\ell-1)  \leq 3(k-1)$, therefore $x_1\leq \sqrt{3(k-1)}$. The same holds for the other end vertex $u_\ell$ and so for $x_{\ell}$ by symmetry.

Since the degree of $u_1$ is at least $\binom{\floor{\frac{k-3}{2}}}{2}$, out of which at most $\sqrt{3(k-1)}$ of the hyperedges are defining hyperedges, the degree of $u_1$ in $\h \setminus \F$ is at least $$ \binom{\floor{\frac{k-3}{2}}}{2} - \sqrt{3(k-1)}
> {\binom{\floor{\frac{k-3}{2}}-1}{2}},$$ 
 if $k\ge 21$. Thus $|N_{\h \setminus \F}(u_1)|\geq \floor{\frac{k-3}{2}}$ and in the same way we have $|N_{\h \setminus \F}(u_{l})|\geq \floor{\frac{k-3}{2}}$. 
\end{proof}

\begin{claim}\label{Contains_cycle}
Let $\ell-1$ be the length of the longest Berge-path in $\h$. Then $\ell \ge k-3$ and $\h$ contains a Berge-cycle of length $\ell-1$.
\end{claim}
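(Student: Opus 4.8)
The plan is to study a longest Berge-path through its two endpoints, supplemented by P\'osa-type rotations. Fix a longest Berge-path $P=u_1,f_1,u_2,\dots,f_{\ell-1},u_\ell$ that, among all longest Berge-paths of $\h$, minimizes $x_1+x_\ell$; write $\F=\{f_1,\dots,f_{\ell-1}\}$ and $U=\{u_1,\dots,u_\ell\}$. By Claim~\ref{size_of_shadow} both $|N_{\h\setminus\F}(u_1)|$ and $|N_{\h\setminus\F}(u_\ell)|$ are at least $\lfloor\frac{k-3}{2}\rfloor$. I would first record two observations. (a) Every neighbour of $u_1$ reached through a hyperedge $h\notin\F$ lies in $U$: otherwise prepending that neighbour and $h$ to $P$ produces a Berge-path of length $\ell$. (b) $\h$ contains no Berge-cycle of length $\ge\ell$: for such a cycle $C$, since $n$ is large there is a vertex outside $C$, and connectivity together with deleting one defining hyperedge of $C$ yields a Berge-path of length $\ell$, a contradiction. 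In particular (b) already gives the ``easy'' half of the claim, namely that every Berge-cycle of $\h$ has length at most $\ell-1$; it remains to bound $\ell$ from below and to produce a Berge-cycle of length exactly $\ell-1$.

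Set $A=\{i:u_i\in N_{\h\setminus\F}(u_1)\}$ and $B=\{i:u_i\in N_{\h\setminus\F}(u_\ell)\}$; by (a) and (b), both $A,B\subseteq\{2,\dots,\ell-1\}$ and $|A|,|B|\ge\lfloor\frac{k-3}{2}\rfloor$. The crucial point is that $(A-1)\cap B=\varnothing$. Indeed, if $i\in(A-1)\cap B$, say $u_1\sim u_{i+1}$ via $g_1\notin\F$ and $u_\ell\sim u_i$ via $g_2\notin\F$, then $u_1,f_1,\dots,u_i,g_2,u_\ell,f_{\ell-1},\dots,u_{i+1},g_1,u_1$ is a Berge-cycle of length $\ell$ provided $g_1\neq g_2$ (its defining hyperedges are $f_1,\dots,f_{i-1},f_{i+1},\dots,f_{\ell-1}$, pairwise distinct and distinct from $g_1,g_2\notin\F$); and if $g_1=g_2$ then that single hyperedge contains both $u_1$ and $u_\ell$, again yielding an $\ell$-cycle. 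Either way this contradicts (b). Since $A-1$ and $B$ are disjoint subsets of $[1,\ell-1]$, we get $|A|+|B|\le\ell-1$, hence $\ell-1\ge2\lfloor\frac{k-3}{2}\rfloor$ and so $\ell\ge k-3$.

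For the cycle itself, observe that the same splicing shows: if there is an index $b$ with $b\in B$ and $b+2\in A$, then $\h$ has a Berge-cycle of length $\ell-1$, namely the one through $U\setminus\{u_{b+1}\}$ obtained from $P$ by using the chords $u_\ell\sim u_b$ and $u_1\sim u_{b+2}$; hyperedge-distinctness is again automatic, since a coincidence of the two chords would put $u_1$ and $u_\ell$ in one hyperedge, contradicting (b). So it suffices to find such a $b$. Since $\ell\le k$ while $|A|+|B|\ge2\lfloor\frac{k-3}{2}\rfloor$, the disjoint sets $A-1$ and $B$ leave at most $3$ ``holes'' in $[1,\ell-1]$; tracing the interval structure of $A$ and $B$ — each maximal block of $B$ is immediately followed by a hole unless it ends at $\ell-1$, and each maximal block of $A$ has a hole two steps to the left of its start unless it starts at $2$ — exhibits such a $b$ in all but a bounded number of boundary configurations. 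To handle those, I would P\'osa-rotate $P$ at the $u_1$-end, moving the left endpoint to $u_j$ for any $j$ with $j+1\in A$: each rotation is again a longest Berge-path, so either some rotated path escapes the offending boundary pattern and the previous argument applies, or the constraints accumulated across all rotations force a chord joining two reachable left endpoints at path-distance $2$, which once more produces the desired Berge-cycle of length $\ell-1$.

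The step I expect to be the main obstacle is controlling the \emph{exact} length of the cycle. Extracting \emph{some} long Berge-cycle — of length $\Omega(k)$, even close to $\ell$ — is routine, but pinning it down to $\ell-1$ needs the near-partition of $[1,\ell-1]$ by $A-1$ and $B$, the closure of the endpoint under rotations, and the disposal of the boundary cases. By contrast the genuinely hypergraph-specific worry, that two chords one wants to use might be forced to coincide, costs nothing here: any such coincidence gives an $\ell$-cycle, which (b) forbids.
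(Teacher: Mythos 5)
Your opening matches the paper: observation (b) (no Berge-cycle of length $\ge\ell$), observation (a) (endpoint neighbours stay in $U$), the disjointness $(A-1)\cap B=\varnothing$ giving $\ell\ge k-3$, and the remark that $b\in B$ with $b+2\in A$ (i.e.\ $S_1^{--}\cap S_\ell\neq\varnothing$) yields a Berge-cycle of length $\ell-1$. The gap is in the hard case where no such $b$ exists. Your interval-tracing shows only that each $B$-block not ending at $\ell-1$ and each $A$-block not starting at $2$ consumes a hole; with up to three holes (and possibly shared ones) this does not exclude configurations like $B=[2,m]$, $A=[m+3,\ell-1]$ with $\ell=k$ even, which satisfy all of $(A\pm1)\cap B=\varnothing$ and $(A\pm2)\cap B=\varnothing$. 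The closing sentence --- that P\'osa rotations at the $u_1$-end will ``escape the boundary pattern'' or ``force a chord joining two reachable left endpoints at path-distance $2$'' --- is asserted, not proved; nothing prevents every rotated path from exhibiting a similarly aligned interval pattern.

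What the paper does at exactly this point is qualitatively different and is, in fact, where the hypothesis $k\ge(2+\varepsilon)r+q$ enters. After observing that the equivalence/interval structure of $S_\ell$ forces $m_2\le 4$ intervals, the paper uses the rotated paths $\F_j$ for $j\in S_1^-$ to conclude that every non-defining hyperedge meeting $U_{S_1^-}$ must be entirely contained in the small set $U\setminus S^*$, where $S^*=(S_\ell^-\cup S_\ell^{--})\cap(S_\ell^+\cup S_\ell^{++})$ has size at least $\lfloor\frac{k-3}{2}\rfloor-8$. It then applies the set degree condition to the \emph{set} $U_{S_1^-}$ (not just a single endpoint, as in Claim~\ref{size_of_shadow}) to get the lower bound $|E(U_{S_1^-})|\ge|S_1^-|\binom{\lfloor\frac{k-3}{2}\rfloor}{r-1}$, and compares this against the upper bound $\binom{k-\lfloor\frac{k-3}{2}\rfloor+8}{r}+O(k)$ to derive a contradiction when $k/r>2+\varepsilon$ and $q$ is chosen large. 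Your sketch never invokes the set degree condition beyond its use inside Claim~\ref{size_of_shadow}, so it has no mechanism to rule out the bad configurations, and it makes no use of the quantitative relation between $k$ and $r$ that the statement requires. Concretely: the ``constraints accumulated across all rotations force a chord'' step needs to be replaced by this double-counting argument.
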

\begin{proof}[Proof of Claim~\ref{Contains_cycle}]
Let $u_1,f_1,u_2,f_2,\dots,u_{\ell-1},f_{\ell-1},u_\ell$ be a longest Berge-path given by Claim~\ref{size_of_shadow} with defining hyperedges $\F=\{f_1,f_2,\dots,f_{\ell-1}\}$ and defining vertices $U=\{u_1,u_2,\dots,u_\ell\}$. 

Before the proof let us introduce some notations: for $\E\subseteq \E(\h)$ and integer $j$ with $1 \le j \le \ell$, let $S_{j,\E}$ denote the set of indices of vertices in $U\cap N_{\h\setminus\E}(u_j)$, and we simply denote $S_{j,\F}$ by $S_j$. In particular, $S_j$ denotes the set of indices $i$ such that there is a hyperedge of $\h$ that contains both $u_i$ and $u_j$ and is not a defining hyperedge of the path. For any set $S$ of integers let $S^-:=\{a:a>0,\, a+1\in S\},~S^{--}=(S^-)^-$. The operations $^+$ and $^{++}$  are defined analogously.

To start the proof, observe first that $\h$ cannot contain a Berge-cycle $C$ of length $\ell$. Indeed,  the hyperedges of such a cycle contain at most $\ell(r-1)$ vertices. Therefore 
 there is a vertex $v\in~V(\h)\setminus~V(C)$, then as $\h$ is connected, there exists a path from $v$ to $C$ and we obtain a path of length at least $\ell$, contradicting our assumption on the length of the longest path.

If $\ell\in S_1$ or equivalently $1\in S_\ell$, then a hyperedge showing this, together with $\F$ forms a Berge-cycle of length $\ell$ in $\h$. So we can assume $S_1,S_\ell\subseteq \{2,\dots,\ell-1\}$ and so $S_1^-~\subseteq~\{1,2,\dots,\ell-~1\}$. 

If $ S_1 \cap S^+_\ell \neq \emptyset$ (or symmetrically $S_1^-\cap S_\ell\neq \emptyset$), then $\h$ contains a Berge-cycle of length~$\ell$. Indeed, if $i\in S_1 \cap S^+_\ell$, then there are hyperedges $e$ and $e'$ in $\h\setminus \F$ with $u_1,u_{i+1}\in e$ and  $u_\ell,u_{i}\in e'$. Then 
$$u_{i},f_{i-1},u_{i-1},\dots,f_1,u_1,e,u_{i+1},f_{i+2},\dots,f_{\ell-1},u_\ell,e'$$
is a Berge-cycle of length $\ell$. (Note that $e$ and $e'$ are distinct hyperedges as  $\ell \not \in S_1$.)
Note that by Claim~\ref{size_of_shadow}, we have $|S_\ell|,|S_1^-|\ge \floor{\frac{k-3}{2}}$. Thus since $S_1 \cap S^+_\ell \neq \emptyset$, we have $\ell\ge k-3$.

The exact same argument shows that if $S_1^{--}\cap S_\ell\neq \emptyset$ or symmetrically $S_1\cap S_\ell^{++}\neq \emptyset$, then $\h$ contains a Berge-cycle of length $\ell-1$ and we are done in this case. 
 
Let $x$ and $y$ be two indices belonging to the set $S_\ell$. We define the relation $x\sim y$ as follows: if $x<y$, then $x\sim y$ if and only if the interval $(x,y]\cap S_1=\emptyset$; if $x>y$, then $x\sim y$ if and only if the interval $(y,x]\cap S_1=\emptyset$.
Clearly, $\sim$ is an equivalence relation. Assume $S_\ell$ has $m_1$, equivalence classes. Also, we say that a maximal subset of consecutive integers in $S_\ell$ is an interval of $S_\ell$. As $S^+_\ell\cap S_1 = \emptyset$ by the above,  elements of the same interval belong to the same equivalence class. Let $m_2$ be the number of intervals in $S_\ell$.  

If $\h$ does not contain cycles of length $\ell$ and $\ell-1$, then for the maximal element $z$ of each equivalence class, we have that $z+1,z+2\notin S_1$ and so by the definition of equivalence classes $z+1,z+2 \notin S_\ell$. Moreover, if an element $z'$ belongs both to $S_1$ and $S_\ell$, then $z'$ is the smallest element of an equivalence class. 
Also if $z''$ is the largest element of an interval that is not the rightmost interval in an equivalence class, then $z''+1\notin S_1\cup S_\ell$. 
In particular, we have $|S_1|,|S_{\ell}|\geq \floor{ \frac{k-3}{2}}$, $|\S_1\cap S_{\ell}|\leq m_1$, $|[\ell-2]\setminus (S_1\cup S_{\ell})|\geq 2(m_1-1)+(m_2-m_1)$  and $S_1\cup S_{\ell}\subseteq [\ell-2]$.
These observations show that $2\lfloor \frac{k-3}{2}\rfloor-m_1+2(m_1-1)+(m_2-m_1)\le \ell-2$ holds. 
As $\ell \le k$, we must have $m_2\le 4$.

Similarly, as in the proof of Claim~\ref{size_of_shadow} we can see that for any $j\in S_1^-$, the vertex $u_j$ is the endpoint of a longest path $\F_j$ with another end vertex $u_\ell$ and with defining vertex set $U$. Observe that the neighborhood $S_\ell$ of $u_\ell$ with respect to the non-defining hyperedges of $\F$ and $\F_j$ is the same, as the single hyperedge $h\in \F_j\setminus \F$ contains $u_1$ and therefore cannot contain $u_\ell$ without creating a cycle of length $\ell$. Therefore $S_{j,\F_j}\subseteq U$ and similarly as above if $[(S_{\ell}^{-}\cup S_{\ell}^{--})\cap (S_{\ell}^+~\cup~S_{\ell}^{++})]\cap ~S_{j,\F_j}\neq~\emptyset$, then $\h$ contains a Berge-cycle of length $\ell$ or $\ell-1$.

Let $S^*:=(S_{\ell}^{-}\cup S_{\ell}^{--})\cap (S_{\ell}^+\cup S_{\ell}^{++})$ then $|S^*|~\ge|S_\ell|-2m_2\ge\floor{\frac{k-3}{2}}-8$. Let $U_{S_1^-}:=\{u_i:i\in S_1^-\}$ and consider $E(U_{S_1^-})$.  Observe that all but one of the defining hyperedges of $\F_i$ are in $\F$, thus there are at most $|\F|+|U_{S_1^-}|\leq k-1+|S_1^-|$ hyperedges altogether in $E(U_{S_1^-})$ that are defining hyperedges of $\F$ or an $\F_i$. By the previous paragraph, all other hyperedges in $E(U_{S_1^-})$ are completely in $U\setminus S^*$, thus we have $$E(U_{S_1^-})\subseteq \binom{U\setminus S^*}{r}\cup \F \cup \bigcup_{x\in S_1^-}\F_x.$$ 
 
\noindent 
By the set degree condition and the above, we must have 
\begin{equation}\label{1}
|S_1^-|\binom{\floor{\frac{k-3}{2}}}{r-1}\le |E(U_{S_1^-})|\le \binom{k-\floor{\frac{k-3}{2}}+8}{r}+k-1+|S_1^-|.
\end{equation}
\noindent 
Using $\floor{\frac{k-3}{2}}\le |S_1^-|$, $\binom{a}{r}=\frac{a}{r}\binom{a-1}{r-1}$ and $\frac{\binom{a+1}{r-1}}{\binom{a}{r-1}}= \frac{a+1}{a-r+2}\le \frac{a}{a-r}$, and writing $k=\alpha r$ we have  \begin{equation}\label{2}
\begin{split}
    \binom{k-\floor{\frac{k-3}{2}}+8}{r} & =\frac{k-\floor{\frac{k-3}{2}}+8}{r}\binom{k-\floor{\frac{k-3}{2}}+7}{r-1} \\
    &
\le\left(\frac{k/2+9}{r}\right)\binom{\floor{\frac{k+17}{2}}}{r-1}=\left(\frac{\alpha}{2}+9/r\right)\binom{\floor{\frac{k+17}{2}}}{r-1}\\ & \le \left(\frac{\alpha}{2}+9/r\right)\left(\frac{\alpha}{\alpha-2}\right)^{10}\binom{\floor{\frac{k-3}{2}}}{r-1}.
\end{split}
\end{equation}

\noindent 
Therefore (\ref{1}), (\ref{2}) and $k-1+|S^-_1|\le 2k=2\alpha r$ implies $\alpha r/2-2\le (\frac{\alpha}{2}+9/r)(\frac{\alpha}{\alpha-2})^{10}+2\alpha$. This shows that for any $\varepsilon>0$, there is an $r_0$ such that if $r>r_0$, then $\alpha< 2+\varepsilon$ must hold, a contradiction. For the finitely many smaller values of $r$, the above inequality gives an upper bound $\beta_r$ for $\alpha=k/r$, which might be larger than $2+\varepsilon$. In that case we can choose $q_{\varepsilon}:=\max_{r\le r_0}\beta_r r$. Then we have $k> q_{\varepsilon} \ge \alpha r=k$, a contradiction.
\end{proof}

Note that the cycle $C$ given by Claim~\ref{Contains_cycle} is a longest Berge-cycle in $\h$ and let its defining vertices and defining hyperedges be $V:=\{u_1,u_2, \dots, u_{\ell-1}\}$ and $E(C):=\{e_1, e_2, \dots, e_{\ell-1}\}$, respectively, with $u_i,u_{i+1} \in e_i$. We have  $\ell$ is either $k-3, \ k-2,  \ k-1$ or $k$ by Claim~\ref{Contains_cycle}. Let us call $u_{i-1}$ and $u_{i+1}$ the \textit{neighbors of} $u_i$ \textit{on $C$}.

\subsection{Preliminary technical claims}

By Lemma~\ref{structure} (i), for any vertex $w\in V(\h)\setminus V$  we have $N_{\h \setminus C}(w)\subseteq V$. For any vertex $w\in V(\h)\setminus V$, we partition $N_{\h\setminus C}(w)$ into two parts the following way: let $M_w$ denote the set of vertices $v \in V$ such that there exists exactly one hyperedge in $\h \setminus C$ containing both $w$ and $v$, and let $D_w$ denote the set of those vertices $v\in V$ for which there exist at least 2 hyperedges in $\h \setminus C$ containing both $v$ and $w$. 

\begin{claim}\label{replace}
For any $w$ and $w'$ with $w,w' \in V(\h)\setminus V$ and not necessarily distinct, we have

(i) If $u_j \in N_{\h\setminus C}(w)$, $u_{j+1}\in N_{\h\setminus C}(w')$, then $w=w'$, $u_j,u_{j+1}\in M_w$ and there exists a non-defining hyperedge $h$ with $w,u_j,u_{j+1}\in h$.

(ii) If $u_j\in N_{\h\setminus C}(w)$, $u_{j+2}\in D_w$, then there exists a cycle $C'$ of length $\ell-1$ in $\h$ such that the defining vertices of $C'$ are those of $C$ but $u_{j+1}$ replaced by $w$.
\end{claim}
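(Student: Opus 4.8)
The two items are both of the form: a hyperedge (or two hyperedges) outside $C$ attaching a vertex $w\notin V$ to certain defining vertices lets us reroute $C$, and if the rerouting is not possible we get a cycle or path that is too long. The plan is to argue by contradiction in each case, producing a Berge-cycle of length $\ell$ (impossible, since $C$ is a longest cycle and $\h$ has a vertex off any longest cycle, hence would contain a path of length $\ell$ — exactly the argument already used in the proof of Claim~\ref{Contains_cycle}), or a Berge-path of length $\ell$ (also impossible by the choice of $\ell-1$ as the longest path length).

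For (i): suppose $u_j\in N_{\h\setminus C}(w)$ and $u_{j+1}\in N_{\h\setminus C}(w')$, witnessed by hyperedges $h_1\ni w,u_j$ and $h_2\ni w',u_{j+1}$ with $h_1,h_2\in\h\setminus C$. First I would handle the case $h_1=h_2$: then $h_1$ contains $u_j,u_{j+1}\in V$ and also $w$ (and $w'$); by Lemma~\ref{structure}(i) applied to the longest cycle $C$, a hyperedge of $\h\setminus C$ contains at most one vertex of $V(\h)\setminus V$, so $w=w'$, and then the hyperedge $h:=h_1$ is the required non-defining hyperedge through $w,u_j,u_{j+1}$. If $h_1\neq h_2$, I invoke Lemma~\ref{structure}(ii) (with the roles: $i=j$, the vertices $v=w$, $u=w'$, hyperedges $h_1,h_2$) to get a contradiction — that lemma says exactly that distinct $h_1,h_2\in\h\setminus C$ with $w,u_j\in h_1$ and $w',u_{j+1}\in h_2$ cannot exist. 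So $h_1=h_2$ is forced, and we are in the previous case; in particular the single hyperedge $h$ through $w,u_j$ is also the single one through $w,u_{j+1}$, and it is the only hyperedge of $\h\setminus C$ containing $w$ together with $u_j$ or with $u_{j+1}$ (any further one would, paired with $h$, again contradict Lemma~\ref{structure}(ii) at the edge $e_j$ or $e_{j-1}$), so $u_j,u_{j+1}\in M_w$.

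For (ii): suppose $u_j\in N_{\h\setminus C}(w)$, witnessed by some $h_1\in\h\setminus C$ with $w,u_j\in h_1$, and $u_{j+2}\in D_w$, so there are two distinct hyperedges $g_1,g_2\in\h\setminus C$ each containing $w$ and $u_{j+2}$. At least one of $g_1,g_2$ is different from $h_1$; call it $g$. Now $h_1$ attaches $w$ to $u_j$ and $g$ attaches $w$ to $u_{j+2}$. The natural rerouting replaces the two consecutive defining hyperedges $e_j$ (through $u_j,u_{j+1}$) and $e_{j+1}$ (through $u_{j+1},u_{j+2}$) by the detour $u_j,h_1,w,g,u_{j+2}$, giving a closed Berge-walk on defining vertices $V\setminus\{u_{j+1}\}\cup\{w\}$ of length $\ell-1$; I must check this is a genuine Berge-cycle, i.e.\ $h_1,g$ are distinct from each other and from all $e_i$ with $i\neq j,j+1$. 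Distinctness of $h_1,g$: if $h_1=g$ then this single non-defining hyperedge contains $w,u_j,u_{j+2}$, all fine, but then I still have the second hyperedge among $\{g_1,g_2\}$, distinct from $h_1$, containing $w$ and $u_{j+2}$; rename so that $h_1\ne g$ always — since $u_{j+2}\in D_w$ gives two choices for $g$, one avoids $h_1$. Distinctness from the remaining $e_i$: they lie in $C$ while $h_1,g\in\h\setminus C$. Thus $C':=$ this new cycle is a Berge-cycle of length $\ell-1$ with the claimed defining vertex set. Note this is essentially the same move as Lemma~\ref{structure}(iii), with $h_1$ playing the role of the second attachment at $u_{j+1}$'s other neighbor — but here the attachments are at $u_j$ and $u_{j+2}$ rather than $u_{j-1},u_{j+1}$, so the index bookkeeping is the only thing to get right.

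The main obstacle I anticipate is purely bookkeeping: making sure in (ii) that the rerouted sequence really uses $\ell-1$ \emph{distinct} hyperedges and $\ell-1$ \emph{distinct} vertices (in particular that $w\notin V$ is not already among the $u_i$, which holds by hypothesis, and that $h_1,g$ do not coincide with a defining hyperedge of $C$, which holds since $h_1,g\in\h\setminus C$), and — should one want it — that $C'$ is again a \emph{longest} cycle so the preliminary-claims machinery applies to it as well; the latter is immediate since its length equals $\ell-1$. There is no hard inequality here; everything reduces to the two impossibility facts (no cycle of length $\ell$, no path of length $\ell$) together with Lemma~\ref{structure}.
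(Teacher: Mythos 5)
Your proof is correct and takes essentially the same approach as the paper. The one stylistic difference is in part (i): you invoke Lemma~\ref{structure}(ii) as a black box to rule out $h_1\neq h_2$ and to establish $u_j,u_{j+1}\in M_w$, whereas the paper re-derives the same length-$\ell$ path/cycle constructions inline; the underlying combinatorics are identical, and your version is if anything tidier (the remark ``at the edge $e_j$ or $e_{j-1}$'' is a harmless slip --- index $j$ alone suffices for both $u_j$ and $u_{j+1}$).
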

  \begin{figure}[ht] 
    \begin{center}
    \includegraphics[width=0.8\textwidth]{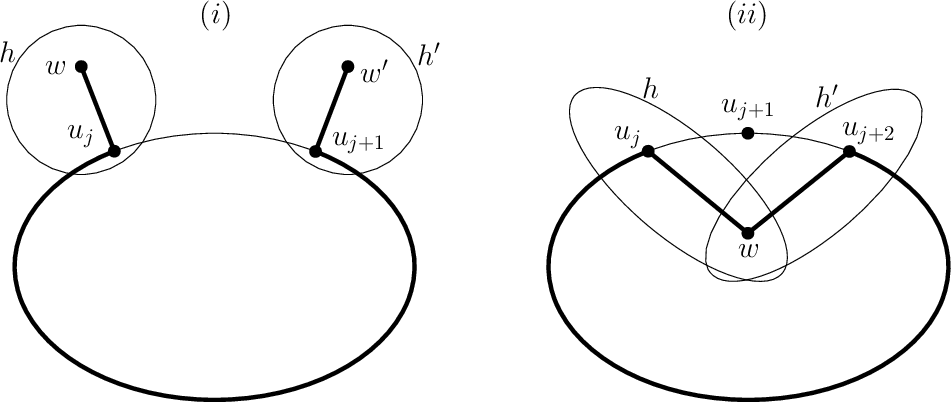}
    \caption{Sketch of the proof of Claim~\ref{replace}}
    \label{Figure:Replace}
    \end{center}
    \end{figure}

\begin{proof}   Let $u_j \in N_{\h\setminus C}(w)$, $u_{j+1}\in N_{\h\setminus C}(w')$. If $w\neq w'$, then for the hyperedges $h,h' \in \h \setminus C$ with $u_j,w\in h$ and $u_{j+1},w'\in h'$, we have $h\neq h'$, from Lemma~\ref{structure} (i). But then 
$$w',h',u_{j+1},e_{j+1},u_{j+2},\dots,u_{\ell-1},e_{\ell-1},u_1,e_1,\dots,u_j,h,w$$ is a Berge-path of length $\ell$, see Figure \ref{Figure:Replace}, a contradiction. So $w=w'$, and if there exist $h\neq h'$ with $u_j,w\in h$ and $u_{j+1},w\in h'$, then the Berge-path presented above is in fact a Berge-cycle that is longer than $C$, a contradiction. This proves (i).

For the second part of the claim, observe that if $u_j\in N_{\h\setminus C}(w)$ and $u_{j+2}\in D_w$, then there exist two distinct hyperedges $h, h' \in \h\setminus C$ such that $u_j,w\in h$ and $u_{j+2},w\in h'$, so in $C$ we can replace $e_j,u_{j+1},e_{j+1}$ by $h,w,h'$ to obtain the desired cycle $C'$, see Figure \ref{Figure:Replace}.
\end{proof}

\noindent 

\begin{claim}\label{replace2}
Suppose $u_{i-1},u_{i+1}, u_j\in D_w$ are three distinct vertices for some $w\in V(\h)\setminus V$ and let $w^*\in V(\h)\setminus V$ be a vertex distinct from $w$. Then we have the following.

(i) There is no hyperedge  $h\in \h\setminus C$ with $u_i,u_{j-1}\in h$ nor with $u_i,u_{j+1}\in h$.

(ii) If $u_{j+2}\in N_{\h\setminus C}(w)$, then $e_{i-1},e_{i}$ do not contain $u_{j+1}$.

(iii) Hyperedges $e_{i-1}$ and $e_i$ are not incident with the vertices  $w,w^*$.

(iv) Suppose $u_{t+1}\in D_{w^*}$ or $u_{t-1}\in D_{w^*}$ for some $t\neq i$. Then there is no  $h\in \h\setminus C$ incident to $u_i$ and $u_t$.

(v) The hyperedges $e_{j-1},e_j$ are not incident with $u_i$.
\end{claim}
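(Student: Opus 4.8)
All five statements are proven by contradiction. Since $\h$ contains no Berge-path and no Berge-cycle of length $\ell$, it suffices in most cases to exhibit such a path or cycle; for the statements involving $w^*$ we instead exhibit a second longest Berge-cycle and invoke Lemma~\ref{structure}. The device used throughout is the following: since $u_{i-1},u_{i+1}\in D_w$, we may choose distinct hyperedges $h_1,h_2\in\h\setminus C$ with $w,u_{i-1}\in h_1$ and $w,u_{i+1}\in h_2$, and then replacing the segment $u_{i-1},e_{i-1},u_i,e_i,u_{i+1}$ of $C$ by $u_{i-1},h_1,w,h_2,u_{i+1}$ yields a longest Berge-cycle $C'$ whose defining vertices are $(V\setminus\{u_i\})\cup\{w\}$ and with $e_{i-1},e_i\notin\E(C')$. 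Whenever two auxiliary hyperedges threaten to coincide I re-pick them using that membership in $D_w$ guarantees at least two choices; these re-selections will be spelled out.

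Statement (iii) is the quickest. If $w\in e_{i-1}$, then, since $u_{i-1}\in D_w$, there is $h'\in\h\setminus C$ with $w,u_{i-1}\in h'$ and $h'\neq e_{i-1}$; replacing $e_{i-1}$ in $C$ by $u_{i-1},h',w,e_{i-1},u_i$ gives a Berge-cycle of length $\ell$, a contradiction, and $w\in e_i$ is symmetric via $u_{i+1}$. For $w^*$: if $w^*\in e_{i-1}$, form $C'$; then $e_{i-1}\in\h\setminus C'$ contains the two vertices $u_i,w^*\in V(\h)\setminus V(C')$, contradicting Lemma~\ref{structure}(i) applied to $C'$ (and $w^*\in e_i$ is again symmetric).

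Statement (iv) reduces to Lemma~\ref{structure}(ii) for $C'$: given $h\in\h\setminus C$ with $u_i,u_t\in h$ and, say, $u_{t+1}\in D_{w^*}$, after forming $C'$ both $u_i$ and $w^*$ lie outside $V(C')$, the vertices $u_t,u_{t+1}$ are consecutive defining vertices of $C'$, and any hyperedge containing $w^*,u_{t+1}$ is distinct from $h$ (since $h$ already contains $u_i\notin V(C')$, it cannot also contain $w^*\notin V(C')$ by Lemma~\ref{structure}(i) for $C'$) — which is precisely the configuration ruled out by Lemma~\ref{structure}(ii) for $C'$; the case $u_{t-1}\in D_{w^*}$ is symmetric. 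For (i), (ii) and (v) I construct the length-$\ell$ Berge-cycle explicitly: the forbidden incidence — a hyperedge $h\in\h\setminus C$ with $u_i,u_{j-1}\in h$ or $u_i,u_{j+1}\in h$ for (i), the incidence $u_{j+1}\in e_{i-1}$ or $u_{j+1}\in e_i$ for (ii), the incidence $u_i\in e_{j-1}$ or $u_i\in e_j$ for (v) — provides an edge joining $u_i$ to a vertex adjacent on $C$ to the double-neighbor $u_j$ of $w$; one then walks around $C$, detouring through $w$ by means of two distinct double-edges (for instance one between $w$ and $u_{i-1}$ and one between $w$ and $u_j$, chosen distinct as above) and using the forbidden incidence as a shortcut bypassing a single defining edge $e_m$. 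The resulting closed walk uses all defining edges except $e_m$, the two detour edges and (in case (i)) the forbidden hyperedge — that is, $\ell$ distinct hyperedges on $\ell$ distinct vertices — and is therefore a Berge-cycle of length $\ell$.

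The main obstacle is bookkeeping rather than any new idea. The surgeries just described degenerate in the boundary cases where $u_j$ is positionally close to $u_i$ on $C$ (roughly $j\in\{i\pm 2,i\pm 3\}$), so that an arc appearing in the construction collapses to a single vertex or two vertices that are supposed to be joined coincide; and, in (iv), when $t=i\pm 1$, so that $u_{t\mp 1}\notin V(C')$ and Lemma~\ref{structure}(ii) for $C'$ no longer applies directly. In these finitely many cases one argues separately, typically using an extra multiplicity created by the forbidden incidence (some vertex now has two hyperedges to the same cycle vertex, e.g.\ via a defining edge and the forbidden hyperedge) together with Claim~\ref{replace} and Lemma~\ref{structure}(iii). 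Verifying that each positional case does produce a Berge-cycle or Berge-path of length $\ell$, and that the auxiliary hyperedges can always be chosen pairwise distinct, is the bulk of the proof.
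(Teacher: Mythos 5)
Your proposal is sound and, for part (iii) (the $w^*$ case) and part (iv), takes a genuinely different and arguably cleaner route than the paper. Where the paper constructs explicit Berge-paths of length $\ell$, you pass to the cycle $C'$ from Claim~\ref{replace}(ii) --- obtained by substituting $w$ for $u_i$ --- and invoke Lemma~\ref{structure} for $C'$: in (iii), $e_{i-1}$ (or $e_i$) lies in $\h\setminus C'$ and would contain both $u_i$ and $w^*$, two vertices outside $V(C')$, violating Lemma~\ref{structure}(i); in (iv), $h$ and the hyperedge joining $w^*$ to $u_{t\pm1}$ realise precisely the configuration forbidden by Lemma~\ref{structure}(ii) applied to $C'$, and your check that these are distinct from each other and from the two detour edges $h_1,h_2$ is exactly what is needed. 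What this buys you is that (iii) and (iv) become corollaries of the general cycle lemma instead of requiring fresh path surgeries; what the paper's direct constructions buy is avoiding any discussion of whether $u_t,u_{t+1}$ remain consecutive in $C'$. Your treatment of (i), (ii), (v) and the $w$ case of (iii) coincides in spirit with the paper's explicit constructions.

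The one place you over-hedge is the boundary cases, which are in fact vacuous rather than requiring separate argument. In (iv), if $t=i+1$ or $t=i-1$, then one of the pairs $(u_i,u_{i+1})$, $(u_{i+1},u_{i+2})$, $(u_{i-1},u_i)$, $(u_{i-2},u_{i-1})$ consists of vertices consecutive on $C$ lying in $N_{\h\setminus C}(w)$ and $N_{\h\setminus C}(w^*)$ respectively, and Claim~\ref{replace}(i) then forces $w=w^*$, contradicting the hypothesis. Similarly, in (i), (ii), (v) the hypothesis that $u_{i-1},u_{i+1},u_j\in D_w$ are three distinct vertices rules out $j\in\{i-1,i+1\}$, and $j\neq i$ because $u_{i-1}\in D_w$ together with Claim~\ref{replace}(i) gives $u_i\notin N_{\h\setminus C}(w)$; for any admissible $j$ the two arcs of $C$ entering the explicit cycle constructions are well defined and disjoint, so nothing degenerates. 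Once you observe this, the proof closes without any residual casework.
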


  \begin{figure}[ht] 
    \begin{center}
    \includegraphics[width=1\textwidth]{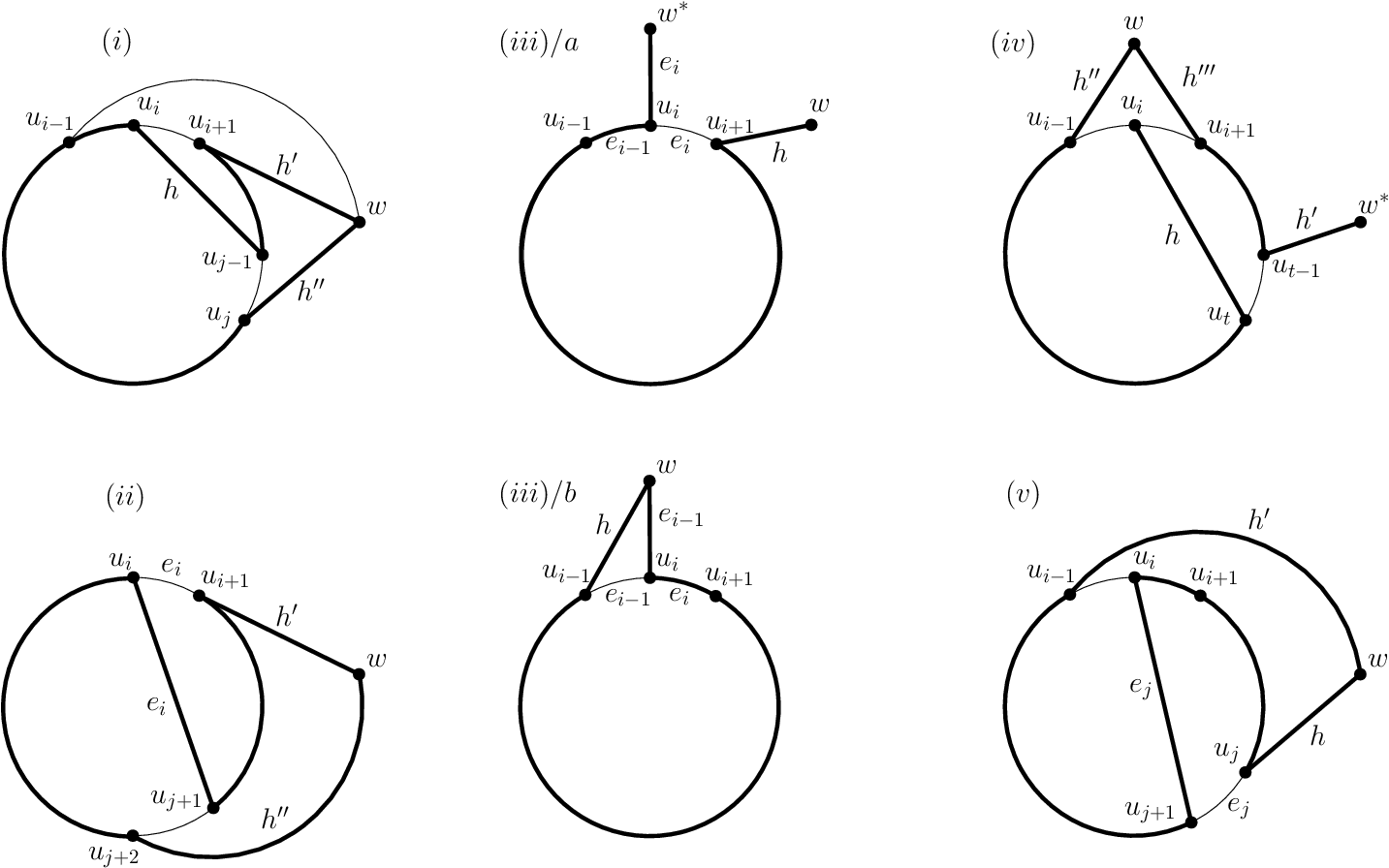}
    \caption{Sketch of the proof of Claim~\ref{replace2}}
    \label{Figure:Replace2}
    \end{center}
    \end{figure}

\begin{proof}
We start with the proof of $(i)$, see Figure \ref{Figure:Replace2} (i). Suppose by contradiction that $u_i,u_{j-1} \in h \in \h \setminus C$. Then by Claim~\ref{replace} (i), we have $w\notin h$ (as otherwise $u_i,u_{i-1}\in M_w$, contradicting $u_{i-1}\in D_w$). Furthermore, as $u_{i+1},u_j\in D_w$, there exist two distinct hyperedges $h',h'' \in \h \setminus C$ with $u_{i+1},w\in h'$ and $u_{j},w\in h''$. Using the fact that $u_{j-1}$ and $u_{i+1}$ are different vertices as there can not be neighboring vertices in $D_w$ by Lemma~\ref{structure} (ii), we have that  $$u_{i-1},e_{i-1},u_i,h,u_{j-1},e_{j-2},u_{j-2},\dots,u_{i+1},h',w,h'',u_j,e_j,u_{j+1},\dots,e_{i-2}$$ is a Berge-cycle longer than $C$, a contradiction. Similarly we can extend the cycle $C$ if $u_i,u_{j+1}~\in~h~\in~\h~\setminus~C$. This proves (i).

\smallskip 

To show (ii) see Figure \ref{Figure:Replace2} (ii), it is enough to get a contradiction if $e_i$ contains $u_{j+1}$, since the other case $e_{i-1}$ contains $u_{j+1}$ is symmetric. We have two non-defining distinct hyperedges, a hyperedge $h''$ incident to $w$ and $u_{j+2}$ and a hyperedge  $h'$ incident to $w$ and $u_{i+1}$ as $u_{i+1} \in D_w$. Then  $$u_i,e_i,u_{j+1},e_{j},u_j,e_{j-1},\dots,e_{i+1},u_{i+1},h',w,h'',u_{j+2},e_{j+2},\dots,u_{i-2},e_{i-2},u_{i-1},e_{i-1}$$ is a Berge-cycle longer than $C$, a contradiction. 

\smallskip 

To show statement (iii), suppose first $w^*\in e_i$. Then for a non-defining hyperedge $h$ incident to $w$ and $u_{i+1}$, we have that $w^*,e_i,u_{i},e_{i-1},u_{i-1},\dots,u_{i+1},h,w$ is a path of length $\ell$ - a contradiction. If $w^*\in e_{i-1}$, then similarly, for a non-defining hyperedge $h$ incident to $w$ and $u_{i-1}$, we have that $w^*,e_{i-1},u_{i},e_i,u_{i+1},\dots,e_{i-2},u_{i-1},h,w$ is a path of length $\ell$ - a contradiction.
If $w \in e_{i-1}$, then we have a contradiction since there exists a cycle longer than $C$, which is obtained from $C$ by exchanging the edge $e_{i-1}$ with $h,w,e_{i-1}$, where $h$ is a non-defining hyperedge incident to $w$ and $u_{i-1}$. Similarly, we get a contradiction if  $w \in e_{i}$. 

\smallskip 

To prove (iv) by a contradiction, suppose that we have a non-defining hyperedge $h$ of $C$ incident to $u_i$ and $u_t$. Assume  without loss of generality that $u_{t-1}\in D_{w^*}$ since the other case is symmetrical. Then there exists a non-defining hyperedge $h'$ different from $h$, incident to  $u_{t-1}$ and $w^*$. Also there are two distinct  non-defining hyperedges $h'', h'''$ with $w, u_{i-1} \in h''$  and $w, u_{i+1} \in h'''$. At first note that hyperedge $h$ is distinct from $h''$ and $h'''$ by Claim~\ref{replace} (i). From Lemma~\ref{structure} (i) we have that hyperedges $h''$ and $h'''$ distinct from $h'$. Finally, we have a contradiction since the following  is a Berge-path of length $\ell$ 
\[
w^*,h',u_{t-1}, e_{t-2}, \cdots u_{i+1}, h''', w, h'', u_{i-1}, e_{i-2}, \cdots,  u_{t+1},e_{t}, u_t, h, u_i.\]

To prove (v) suppose by a contradiction that $e_j$ contains $u_i$. There are distinct non-defining hyperedges $h,h'$ with $w,u_j\in h$ and $w,u_{i-1}\in h'$. Then $$u_{j+1},e_j,u_i,e_i,u_{i+1},e_{i+1},\dots,e_{j-1},u_j,h,w,h',u_{i-1},e_{i-2},u_{i-2},\dots, e_{j+2},u_{j+2},e_{j+1}$$ is a Berge-cycle of length longer than $C$. This contradiction proves (v). The proof for the case $u_i\in e_{j-1}$ is analogous.
\end{proof}

By Claim~\ref{replace} (i) and the set degree condition

$$\binom{\lfloor\frac{k-3}{2}\rfloor}{r-1} \le |M_w|+ \binom{\min \left\{{\floor{\frac{\ell-1-|M_w|}{2}}}, |D_w| \right\}}{r-1}$$

\noindent 
must hold for all $w\in V(\h)\setminus V(C)$. At first we observe that $|M_w| \le 3$ as otherwise $\ell-1-|M_w|\le k-5$, thus the above inequality implies $\binom{\lfloor\frac{k-3}{2}\rfloor}{r-1} \le \ell-1+\binom{\lfloor\frac{k-5}{2}\rfloor}{r-1}$, a contradiction if $k$ is sufficiently large. Therefore, we have  $\binom{\lfloor\frac{k-3}{2}\rfloor}{r-1} \le 3+\binom{|D_w|}{r-1}$, thus for $k\ge 11$ we have 
\begin{equation}\label{dewe}
    |D_w|\ge \lfloor \frac{k-3}{2}\rfloor.
\end{equation}

\medskip 

Recall that $V=\{u_1,\dots,u_{\ell-1}\}$ is the set of defining vertices of $C$, and $V$ differs from $V(C)$, which is the set of all the vertices contained in the defining hyperedges of $C$.
We say that a vertex $u_i \in V$ is \textit{replaceable by $w$}, if  $u_{i-1},u_{i+1} \in D_w$, and we denote by $R_w$ the set of vertices that are replaceable by $w$. This name comes from the fact that we can obtain another cycle $C'$ of length $\ell-1$ from $C$ by replacing $u_i$ by $w$, replacing $e_{i-1}$ by a hyperedge $h$ from $D_w$ containing $u_{i-1}$, and replacing $e_i$ by a hyperedge $h'$ from $D_w$ containing $u_{i+1}$ such that $h'\neq h$.

A vertex is called \textit{replaceable}, if it is replaceable by $w$ for some $w\in V(\h)\setminus V$. For a replaceable vertex $w'=u_i$, we define $D_{w'}$ and $M_{w'}$ as for vertices in $V(\h)\setminus V$, except $C$ is replaced by $C'$.

For a vertex $w \in V(\h) \setminus V$ let us call a maximal set $I$ of consecutive defining vertices of $C$ in $V \setminus D_w$ a \textit{missing interval for $w$} (or just missing intervals, if $w$ is clear from the context) if its size is at least two. Let $I_1,I_2,\dots,I_s$ be the missing intervals of $C$ for $w$ and let us denote by $\overline{I_1}, \overline{I_2}, \ldots, \overline{I_s}$ the same intervals without the terminal vertices (it is possible that $\overline{I_j}=\emptyset$). We have $\sum_{i=1}^s(|I_i|-1)=\ell-1-2|D_w|$ by Claim~\ref{replace} (i). In particular, as $|D_w|\ge \floor{\frac{k-3}{2}}$ by (\ref{dewe}),
we have $s\le 3$, if $k$ is even and $s\le 2$, if $k$ is odd. 
Let us consider a hyperedge $e_j\in C$ such that $u_j$ or $u_{j+1}$ is from a missing interval.
The number of such hyperedges is $\sum_{i=1}^s(|I_i|+1)$, 

\begin{observation}\label{obsi}
    $\sum_{i=1}^s(|I_i|+1)$ is at most 9, if $k$ is even, and at most 6, if $k$ is odd.
\end{observation}
Our next technical claim is about missing intervals.

\begin{claim}\label{replace3}
Suppose that $u_i,u_{i+1},\dots,u_{i+t}$ form a missing interval for some $w\in V(\h)\setminus V$. Then 

(i) $e_{i-1}$ and $e_{i+t}$ do not contain vertices 
outside $V$; and 

(ii) if $u_{i-1}\in D_{w'}$ (resp. $u_{i+t+1}\in D_{w'}$) for some $w'\neq w$, then $e_{i-1}$ (resp. $e_{i+t}$) does not contain a vertex from $R_w $. 
\end{claim}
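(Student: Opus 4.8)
The plan is to prove both parts of Claim~\ref{replace3} by the same kind of path/cycle-extension argument used throughout the preliminary claims, exploiting that a missing interval for $w$ contains many vertices of $D_w$ on both of its sides while the interval itself is ``empty'' from $w$'s point of view. Throughout, write $I=\{u_i,u_{i+1},\dots,u_{i+t}\}$ for the missing interval, so $u_{i-1},u_{i+t+1}\in D_w$ by maximality of $I$, and recall $u_i,\dots,u_{i+t}\notin D_w$.

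For part (i), suppose for contradiction that $w^*\in e_{i-1}$ for some $w^*\in V(\h)\setminus V$ (the case $w^*\in e_{i+t}$ is symmetric). Since $u_{i-1}\in D_w$, there is a non-defining hyperedge $h$ with $w,u_{i-1}\in h$; by Lemma~\ref{structure}(i), $h\neq e_{i-1}$. If $w^*=w$ we would already get a cycle longer than $C$ by replacing $e_{i-1}$ with $h,w,e_{i-1}$ (exactly as in the last paragraph of the proof of Claim~\ref{replace2}(iii)); so assume $w^*\neq w$. Now I would build a Berge-path of length $\ell$ starting $w^*,e_{i-1},u_{i-1},h,w,\dots$ and then continuing: from $w$, since $|D_w|\ge\lfloor\frac{k-3}{2}\rfloor$ is large while the interval contributes few forbidden indices, pick a vertex $u_m\in D_w$ on the ``far side'' of the cycle from the interval and a second hyperedge $h'\neq h$ with $w,u_m\in h'$, then traverse $C$ around to use up $\ell-1$ hyperedges in total without reusing $e_{i-1}$ or $h$. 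The point is that $w^*,e_{i-1},u_{i-1},h,w,h',u_m$ together with the appropriate arc of $C$ gives a path on $\ell+1$ defining vertices, i.e.\ length $\ell$, contradicting maximality; one has to check $w^*\notin\{w\}$ and that $h,h',e_{i-1}$ are pairwise distinct, both of which follow from Lemma~\ref{structure}(i)--(ii) and $u_{i-1}\in D_w$. (If $w^*$ happens to lie in some $e_m$ used by the arc, that only happens for $O(1)$ indices $m$, and since $|D_w|$ is large we have freedom to choose $u_m$ avoiding them.)

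For part (ii), suppose $u_{i-1}\in D_{w'}$ with $w'\neq w$ and, for contradiction, that some $u_p\in R_w$ lies in $e_{i-1}$, i.e.\ $u_{i-1}\in e_{i-1}\ni u_p$ (so $p\in\{i-1,\dots\}$, but $u_p\in R_w$ means $u_{p-1},u_{p+1}\in D_w$, so $u_p\notin I$ and in fact $p\neq i-1,i,\dots$; note $e_{i-1}=\{u_{i-1},u_i,\dots\}$ in the uniform case has other slots). Using $u_{p-1},u_{p+1}\in D_w$ we get two distinct non-defining hyperedges $h'',h'''$ with $w,u_{p-1}\in h''$ and $w,u_{p+1}\in h'''$; using $u_{i-1}\in D_{w'}$ we get a non-defining hyperedge $g$ with $w',u_{i-1}\in g$. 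Then replacing in $C$ the vertex $u_p$ (with its two incident defining edges $e_{p-1},e_p$) by $h'',w,h'''$ produces a cycle $C'$ of length $\ell-1$ on the vertex set $(V\setminus\{u_p\})\cup\{w\}$ which \emph{still uses} $e_{i-1}$ as a defining edge; but now on $C'$ the vertex $u_{i-1}$ is still a defining vertex incident to $e_{i-1}$, and $g$ attaches $w'$ to $u_{i-1}$, and $w'\neq w$ is off $C'$, so $\dots$ actually the cleaner route: exhibit directly a Berge-path of length $\ell$ of the form $w',g,u_{i-1},e_{i-1}$-style start, then route through the rewired arc $h'',w,h'''$ around $u_p$, then close up along the remaining arc of $C$; the rewiring buys exactly the one extra defining vertex $w$ needed to reach length $\ell$. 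Distinctness of $g,h'',h''',e_{i-1},e_{p-1},e_p$ again follows from Lemma~\ref{structure}(i)--(ii), $u_p\notin\{u_{i-1},u_i\}$, and $w'\neq w$.

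The main obstacle I anticipate is bookkeeping the distinctness of all the non-defining hyperedges involved and making sure the concatenated alternating sequence genuinely has distinct vertices and distinct hyperedges, so that it is a bona fide Berge-path/cycle of length $\ell$ or $\ell-1$; in particular one must use Lemma~\ref{structure}(i) (no hyperedge outside $C$ meets $V(\h)\setminus V$ in two vertices, so $w,w^*,w'$-edges are forced apart) and Lemma~\ref{structure}(ii) (no two consecutive defining vertices both in $D_w$) repeatedly, and one must use the bound $|D_w|\ge\lfloor\frac{k-3}{2}\rfloor$ together with the fact that only $O(1)$ defining edges are ``touched'' by the missing interval to guarantee there is room to pick the auxiliary vertex $u_m$ (in part (i)) away from all the finitely many obstructions. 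Everything else is a routine repetition of the extension patterns already established in Claims~\ref{replace}--\ref{replace2}.
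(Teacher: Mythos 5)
There is a genuine gap in your part (i). The path you build,
$w^*,e_{i-1},u_{i-1},h,w,h',u_m$ followed by an arc of $C$ from $u_m$,
cannot possibly visit all $\ell-1$ defining vertices of $C$: once you jump from $u_{i-1}$ to $w$ and then to $u_m$, the only vertices of $C$ you can still pick up are those on one arc between $u_m$ and $u_{i-1}$, and all vertices on the other arc are lost. Your vertex count of $\ell+1$ would require $u_m$ to be adjacent to $u_{i-1}$ on $C$ (so that one of the two arcs is empty), but $u_{i-1}$'s other neighbour $u_{i-2}$ cannot lie in $D_w$ by Claim~\ref{replace}~(i) applied to the pair $(u_{i-2},u_{i-1})$: two consecutive defining vertices cannot both be in $D_w$. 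So there is no admissible choice of $u_m$, and the ``$|D_w|$ is large, so there is freedom'' remark does not repair this — largeness is irrelevant because the obstruction is at $u_{i-2}$, not an $O(1)$ set of bad $e_m$'s. The paper avoids this entirely by using a different path shape: start $w,h,u_{i-1}$, then traverse the entire cycle $C$ the long way around (using every defining vertex and every defining hyperedge except $e_{i-1}$), ending at $u_i$; now $e_{i-1}$ is untouched and contains $u_i$ and $w^*$, so appending $e_{i-1},w^*$ yields a Berge-path of length $\ell$ when $w^*\neq w$, and appending $e_{i-1}$ alone closes a Berge-cycle of length $\ell$ when $w^*=w$.

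Your part (ii) starts in the right direction (replace the replaceable $u_p\in e_{i-1}$ by $w$ to form $C'$, and then treat $u_p$ as a vertex outside $V(C')$), which is exactly the reduction the paper uses. But you abandon that reduction mid-sentence, and the ``cleaner route'' sketch that replaces it does not clearly produce a path with $\ell+1$ distinct vertices either: starting $w',g,u_{i-1},e_{i-1},u_i$ and then swapping $u_p$ for $w$ while going around $C$ keeps the vertex count at $\ell$, not $\ell+1$. The correct way to finish (and what the paper means) is to run the exact part (i) argument on $C'$: $w',h,u_{i-1}$, around $C'$ the long way (which now passes through $w$ in place of $u_p$), ending at $u_i$, then $e_{i-1},u_p$; this does give $\ell+1$ vertices because $u_p\notin V(C')$. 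I recommend reworking both parts with the ``go all the way around the cycle and save $e_{i-1}$ for last'' template rather than the ``jump to a far $u_m$'' template.
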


\begin{proof}
To prove (i) observe that there exists a Berge-path starting with the vertex $w$, a non-defining hyperedge $h$, the vertex $u_{i-1}$, going around $C$ with defining vertices and hyperedges and finishing with the vertex $u_i$. Such $h$ exists since $u_{i-1}$ does not belong to the missing interval, so $u_{i-1}\in D_w$. Note that we did not use the hyperedge $e_{i-1}$. Assume that $e_{i-1}$ contains $w^*\in V(\h)\setminus V$. If $w=w^*$, then $e_{i-1}$ closes a Berge-cycle longer than $C$, a contradiction, while if $w\neq w^*$, then finishing with $e_{i-1},w^*$ we obtain a Berge-path of length $\ell$, a contradiction. This contradiction proves (i). A similar argument shows the statement for the hyperedge $e_{i+t}$. 


To prove (ii), assume that $u_j\in R_w$ is in $e_{i-1}$ and apply Claim~\ref{replace} (ii) to obtain a cycle $C'$ by replacing $u_j$ by $w$. In $C'$, we have that $e_{i-1}$ does not contain vertices outside the defining vertices of $C'$, as in the proof of (i). But $e_{i-1}$ contains $u_j$, which is not a defining vertex of $C'$, a contradiction.
\end{proof}

Here we will show that $|D_{w^*}|\ge \floor{\frac{k-3}{2}}$ holds even for vertices $w^*\in V(C)\setminus V$, therefore we have $|D_{w'}|\ge \floor{\frac{k-3}{2}}$ for all $w' \in V(\h) \setminus V$.  

By Claim~\ref{replace2} (iii) and Claim~\ref{replace3} (i), if $w^*\in V(C)\setminus V$ and $u_i\in D_w$, then $w^*\notin e_{i-1},e_i$.
Therefore the number of defining hyperedges that may contain $w^*$ is at most 3. So Claim~\ref{replace} and the set degree condition implies
$$\binom{\lfloor\frac{k-3}{2}\rfloor}{r-1} \le 3+|M_{w^*}|+ \binom{\min \left\{{\floor{\frac{\ell-1-|M_{w^*}|}{2}}}, |D_{w^*}| \right\}}{r-1}.$$
Just as for $w\in V(\h)\setminus V(C)$, in two steps we obtain $|D_w|\ge \lfloor\frac{k-3}{2}\rfloor$ for $k$ large enough.

\medskip 

Before continuing with giving possible embeddings of $\h$ into some $\h_{n,a,b_1,...,b_s}$ let us state a last technical claim that will be used several times. Let us recall that a terminal vertex $v$ is the vertex of a missing interval that is adjacent to a vertex from $D_w$.

\begin{claim}\label{2intervals}
Suppose $D_w=D_{w'}$ for some  $w'\in V(\h)\setminus V$ with $w'\neq w$.

\smallskip 

(i) There does not exist $h\in \h\setminus C$ such that $h$ contains terminal vertices of two distinct missing intervals of $w$.

\smallskip 
(ii) If $\{ u_i,u_{i+1},u_{i+2} \}$ and $ \{ u_j,u_{j+1} \}$ form missing intervals of $w$ and there exists $h\in \h\setminus C$ with $u_{i+1},u_j\in h$ or $u_{i+1},u_{j+1}\in h$, then there does not exist $h'\in \h\setminus C$, with $u_i,u_{i+2}\in h'$.
\end{claim}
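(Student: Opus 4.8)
The plan is to prove both parts by contradiction: assuming the forbidden hyperedges exist, we build a Berge-cycle of length at least $\ell$, contradicting that $C$ is a longest Berge-cycle (and $\ell-1$ the longest Berge-path length). Three facts already in hand will be used throughout. First, $|D_w|=|D_{w'}|\ge\floor{\frac{k-3}{2}}$, and every vertex of $D_w=D_{w'}$ lies in at least two hyperedges of $\h\setminus C$ through $w$ and at least two through $w'$. Second, by Lemma~\ref{structure}(ii) applied with $u=v=w$, no two vertices of $D_w$ are consecutive on $C$ (otherwise a single ``$w$-bridge'' would already enlarge $C$); together with the size bound this forces the missing intervals of $w$ to have small total length — in the setting of part (ii) it forces $\ell=k$ and that $\{u_i,u_{i+1},u_{i+2}\}$ and $\{u_j,u_{j+1}\}$ are the only missing intervals, the two $C$-arcs between them being long. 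Third, the basic building block: for $u_a,u_b\in D_w$ and $z\in\{w,w'\}$ one may pick \emph{distinct} hyperedges $g,g'\in\h\setminus C$ with $u_a,z\in g$, $u_b,z\in g'$, so that $u_a,g,z,g',u_b$ is a Berge-path through $z$; and since $h$ (and $h'$, in part (ii)) contains at most one vertex outside $V$ by Lemma~\ref{structure}(i), we may always take $z$ to be whichever of $w,w'$ does not lie in the relevant extra hyperedge, keeping every hyperedge in our constructions distinct. This last point is exactly where the hypothesis $D_w=D_{w'}$ enters.

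\textbf{Part (i).} Let $h\in\h\setminus C$ contain a terminal vertex $u_a$ of a missing interval $I$ and a terminal vertex $u_b$ of a different missing interval $I'$, and let $u_{a'},u_{b'}\in D_w$ be the $C$-neighbours of $u_a,u_b$, respectively. Remove from $C$ the edges $\{u_a,u_{a'}\}$ and $\{u_b,u_{b'}\}$, splitting $C$ into two paths, and reattach them using $h$ (joining $u_a$ to $u_b$) together with a bridge through $w$ (joining $u_{a'}$ to $u_{b'}$). In the terminal-position configurations where this produces two cycles rather than one — i.e.\ when $u_a$ and $u_b$ end up as the two endpoints of a single path — we additionally delete the two $C$-edges around one single non-$D_w$ vertex $u_y$ lying in one of the long $D_w$-arcs (such a $u_y$ exists since the arcs are long and $D_w$ is independent on $C$), discarding $u_y$; the three resulting paths then reassemble, via $h$ together with one bridge through $w$ and one through $w'$, into a single Berge-cycle. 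In every case the new cycle has vertex set $V\cup\{w\}$, $V\cup\{w,w'\}$, or $(V\setminus\{u_y\})\cup\{w,w'\}$, hence length at least $\ell$ — a contradiction.

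\textbf{Part (ii).} Assume $u_{i+1},u_j\in h\in\h\setminus C$ and $u_i,u_{i+2}\in h'\in\h\setminus C$ (the case $u_{i+1},u_{j+1}\in h$ is treated by the mirror-image choices indicated below, and the degenerate possibility $h=h'$, which would place $u_i,u_{i+1},u_{i+2}$ in a common hyperedge, is disposed of by a short direct rerouting). Delete from $C$ the three edges $\{u_{i-1},u_i\}$, $\{u_{i+1},u_{i+2}\}$ and $\{u_{j-1},u_j\}$. This breaks $C$ into three paths: the two-vertex path $P_1=u_i,u_{i+1}$; the path $P_2$ from $u_{i+2}$ through the first $D_w$-arc to $u_{j-1}\in D_w$; and the path $P_3$ from $u_j$ through $u_{j+1}$ and the second $D_w$-arc to $u_{i-1}\in D_w$. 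Now $h$ joins $u_{i+1}\in P_1$ to $u_j\in P_3$, $h'$ joins $u_i\in P_1$ to $u_{i+2}\in P_2$, and a bridge through $w$ joins $u_{j-1}\in P_2$ to $u_{i-1}\in P_3$; these three connectors glue $P_1,P_2,P_3$ into one Berge-cycle with vertex set $V\cup\{w\}$, of length $\ell$ — a contradiction. (In the case $u_{i+1},u_{j+1}\in h$, one instead deletes $\{u_i,u_{i+1}\}$, $\{u_{i+2},u_{i+3}\}$ and $\{u_{j+1},u_{j+2}\}$ and argues symmetrically, again obtaining a Berge-cycle on $V\cup\{w\}$.)

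\textbf{Expected main obstacle.} The core of the argument is purely combinatorial: choosing which edges of $C$ to delete and which bridges to insert so that the pieces glue into a \emph{single} Berge-cycle rather than several, while all hyperedges involved remain pairwise distinct. Two features make this delicate. Because $D_w$ is an independent set of $C$ (Lemma~\ref{structure}(ii)), a bridge can never simply replace one edge of $C$, so one must always ``spend'' a white vertex or a terminal to make room for $w$; and should $h$ or $h'$ happen to pass through $w$, the bridging vertex must be switched to $w'$, which is possible only because $D_w=D_{w'}$. Verifying the handful of terminal/orientation configurations in part (i), and the small $h=h'$ case in part (ii), is where the bookkeeping lies.
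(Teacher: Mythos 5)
Your plan is to contradict the maximality of $C$ by building a Berge-\emph{cycle} of length at least $\ell$ via surgery on $C$, whereas the paper builds a Berge-\emph{path} of length $\ell$ whose two endpoints are $w$ and $w'$. The two are close in spirit, but the path construction is uniformly cleaner: it removes three defining hyperedges around the relevant terminal vertices, traverses the three resulting arcs (one of them through $h$), and spends $w$ in the middle of the path and $w'$ at an end, so it works identically in every terminal-orientation. Your cycle construction is inherently more delicate because a cycle has no loose ends, and this is visible in part (i): when $h$ joins a left terminal of one interval to a right terminal of the other, removing the two incident $C$-edges on the $D_w$-side gives two pieces one of which already has both vertices of $h$ as its endpoints, so $h$ just closes that piece into a separate short cycle. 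Your fix of deleting the two $C$-edges around a vertex $u_y\notin D_w$ can work, but only if $u_y$ is taken in the arc \emph{between} the two intervals (on the side cut off by $h$), and with a specific choice of which of the four remaining endpoints the $w$-bridge and the $w'$-bridge should join; as written you leave the choice of arc and the reconnection pattern unspecified, and with $u_y$ chosen from the other long arc the construction again yields two cycles.

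There is also a genuine gap you do not address: the arc between the two missing intervals may consist of a single $D_w$-vertex (the two intervals separated by just one vertex of $D_w$), in which case there is no non-$D_w$ vertex $u_y$ to sacrifice in that arc, and your patched construction has no move to make. The paper's path construction is immune to this because it never needs to delete a vertex of $C$. Two smaller points: in part (ii), ruling out $h=h'$ is immediate from part (i) (if $h=h'$ then $h$ contains $u_i$ and $u_j$, terminal vertices of two distinct missing intervals), which is tidier than an unspecified ``short direct rerouting''; and for the distinctness of hyperedges it is not enough to choose the bridging vertex $z\in\{w,w'\}$ outside the single extra hyperedge $h$, because in part (ii) there are two extras $h,h'$, one of which could contain $w$ and the other $w'$. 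That case is still fine, but the reason is Claim~\ref{replace}~(i) rather than the switching trick: a bridge hyperedge coinciding with $h$ would force two consecutive cycle vertices into $M_w$, contradicting that one of them is in $D_w$.
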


\begin{proof}

  We prove (i) by contradiction. Suppose $\{ u_i,u_{i+1},\dots,u_{i+t} \}$ and $\{ u_j,u_{j+1},\dots,u_{j+z} \}$ are two distinct missing intervals of $w$.
  
  $\bullet$ Suppose first $u_i,u_{j+z}\in h\in \h\setminus C$. We have $ u_{i+t+1}, u_{i-1}, u_{j+z+1}\in D_w$, therefore there are three different hyperedges $h_w$, $h_w'$ and $h_{w'}$, such that $h_w$ is incident to $w$ and $u_{i+t+1}$, $h_w'$ is incident to $w$ and $u_{i-1}$ and  $h_{w'}$ is incident to  $u_{j+z+1}$ and $w'$. Note that all those hyperedges are different from $h$ by Claim \ref{replace} (i). Then we have a contradiction since the following Berge-path is of length $\ell$, as it contains all the $\ell-1$ defining vertices of $C$ and $w$ and $w'$:
 \[
 u_{i+t}, \dots, u_i, h, u_{j+z}, e_{j+z-1}, \dots, u_{i+t+1}, h_w, w, h_w', u_{i-1}, e_{i-2},\dots, u_{j+z+1}, h_{w'}, w'.
\]

 $\bullet$ If $u_{i+t},u_{j+z}\in h\in \h\setminus C$, then the Berge-path of length $\ell$ (using similar ideas as in the previous bullet) is
  \[
 u_{i}, \dots, u_{i+t}, h, u_{j+z}, e_{j+z-1}, \dots, u_{i+t+1}, h_w, w, h_w', u_{i-1}, e_{i-2},\dots, u_{j+z+1}, h_{w'}, w',
\]
 
and we are done with the proof of (i).

\medskip 

In (ii) we can assume that $u_{i+1},u_{j+1}\in h$ holds since the case $u_{i+1},u_{j}\in h$ is identical. The proof of this part is similar, at first we observe from part (i) that we have $h\neq h'$. Then the following Berge-path of length $\ell$ gives us a contradiction: 
\[
u_i,h',u_{i+2},e_{i+1},u_{i+1},h,u_{j+1},e_j,u_{j},e_{j-1},\dots, u_{i+3}, h_w, w, h_w', u_{i-1}, e_{i-2},\dots, u_{j+2}, h_{w'}, w'.
\]
\end{proof}

\subsection{Possible embeddings of $\h$}

We are now able to give possible embeddings of $\h$ into some $\h_{n,a,b_1,...,b_s}$. In this subsection, we gather all the information that we know about these embeddings so far and in the next subsection, we analyze further the different cases to finish the proof.

Let us fix $w\in V(\h)\setminus V$ with $D_w$ of maximum size and let $\h^*$ denote the sub-hypergraph of $\h$ that we obtain by removing those defining hyperedges $e_i$ of $C$ for which at least one of $u_i$ or $u_{i+1}$ is a vertex of a missing interval for $w$. By Observation \ref{obsi}, $|\h|\le |\h^*|+9$. 

\medskip 

 If we are in a case when for all $w' \in V(\h) \setminus V$ we have $D_{w'} \subseteq D_w$, then let $A=D_w$, $B_i=I_i$ for $i=1,2,\ldots,s$ and $L=V(\h)\setminus \left(D_w\cup (\cup_{i=1}^sI_i)\right)$. Let us summarize the findings of the technical claims and enumerate the types of different hyperedges in 
$\h\setminus \h_{n,a,b_1,b_2,\dots,b_s}$ in this scenario.

\begin{summary}\label{sum1} \ 

\noindent Assume that for all $w' \in V(\h) \setminus V$ we have $D_{w'} \subseteq D_w$.
If $h\in \h\setminus \h_{n,a,b_1,b_2,\dots,b_s}$ is not a defining hyperedge of $C$ (i.e., $h \in \h \setminus C$), then
\begin{enumerate}

    \item 
    either there exists $v\in (V(\h)\setminus V)\cup R_w$ such that $h\setminus \{v\} \subseteq D_w \cup \bigcup_{i=1}^s\overline{I_i}$ and $h\cap \bigcup_{i=1}^s\overline{I_i}\neq \emptyset$ (we refer to these hyperedges as type 1 hyperedges in what follows); or
    \item
    $h\subseteq V\setminus R_w$ and $h$ contain vertices from at least two distinct missing intervals. We refer to these hyperedges as type 2 hyperedges in what follows.
\end{enumerate}
If $e_i\in \h\setminus \h_{n,a,b_1,b_2,\dots,b_s}$ is a defining hyperedge of $C$, then
\begin{enumerate}[resume]
    \item 
    either $e_i\in \h\setminus \h^*$; or
    \item
    $u_i$ or $u_{i+1}$ belongs to $R_w$, $e_i\setminus \{u_i,u_{i+1}\}\subseteq  D_w \cup \bigcup_{i=1}^sI_i$ and $e_i \cap \bigcup_{i=1}^sI_i\neq \emptyset$.
\end{enumerate}
\end{summary}

\begin{proof} 

Suppose first that $h$ is not a defining hyperedge of $C$ and $h$ contains a vertex $v\in (V(\h)\setminus V) \cup R_w$. We claim that $h$ cannot contain any $v'\in V(\h)\setminus V$ with $v'\neq v$. Indeed, if $v\notin V$, then it follows from Lemma~\ref{structure} (i). If $v\in R_w$ and $v'=w$, then $w$ can be inserted to obtain a longer cycle than $C$, while if $w\neq v'$, then using $h$ and the defining vertices and hyperedges of $C$ one can create a Berge-path of length $\ell$ from $v'$ to $w$.

We also claim that $h$ cannot contain a neighbor of a vertex in $D_w$ on $C$. Indeed, assume that $v\notin V$. If $h$ contains $u_{i-1}$ and $u_i\in D_w$, then a hyperedge in $\mathcal{H}\setminus \mathcal{C}$ that differs from $h$ and contains both $w$ and $u_i$. This contradicts Lemma~\ref{structure} (ii).
If $v\in R_w$, then $h$ cannot contain a neighbor of a vertex in $D_w$ on $C$ by Claim~\ref{replace2} (i).
Therefore, $h$ cannot contain other vertices of $R_w$, nor terminal vertices of missing intervals. This gives possibility 1, in other words shows that $h$ is of type 1.
 
\smallskip  
 
 Otherwise if $h\in \h\setminus\h_{n,a,b_1,b_2,\dots,b_s}$ is not a defining hyperedge of $C$, then we must have $h\subseteq V\setminus R_w$. As all hyperedges in $\binom{A\cup I_j}{r}$ belong to $\h_{n,a,b_1,b_2,\dots,b_s}$, there must exist two distinct missing intervals meeting $h$.  This gives possibility 2, in other words shows that $h$ is of type 1.
 
\medskip 
    
Let $e_i\in \h\setminus \h_{n,a,b_1,b_2,\dots,b_s}$ be a defining hyperedge of $C$. If at least one of $u_i$ or $u_{i+1}$ belongs to a missing interval, then $e_i \in \h\setminus \h^*$ by definition of $\h^*$. This gives possibility 3. Note that we have more information on some of these hyperedges by Claim~\ref{replace3}.

\smallskip

Otherwise $u_i$ or $u_{i+1}$ belongs to $R_w$. By Claim~\ref{replace2} (ii), $e_i$ does not contain any other vertex from $R_w$, and by Claim~\ref{replace2} (iii)  $e_i$ cannot contain any vertex from $V(\h)\setminus V$. 
This gives us possibility 4. Even more, if the unique element of $e_i\cap R_w$ is also replaceable by some $w'\neq w$, then $e_i$ cannot contain $w$ either. 
\end{proof}

If we are in a case when we have vertices $w,w'\in V(\h)\setminus V$ with $D_w\not\subseteq D_{w'}$ and $D_{w'}\not\subseteq D_w$, then as $\floor{\frac{k-3}{2}}\le |D_w|,|D_{w'}|$, we will have $\floor{\frac{k-1}{2}}\le |D_w\cup D_{w'}|$. Since the elements of $D_w\cup D_{w'}$ cannot be neighbors on $C$ by Claim \ref{replace} (i) and $|C|\le k-1$, we must have $|D_w\cup D_{w'}|=\floor{\frac{k-1}{2}}$. 

If $|C|=2\floor{\frac{k-3}{2}}+2$, then we will embed $\h$ to $\h_{n,\floor{\frac{k-1}{2}}}$, with $A=D_w\cup D_{w'}$ and all the other vertices are going to $L$.

Recall that $\h^*$ denotes the sub-hypergraph of $\h$ that we obtain by removing those defining hyperedges $e_i$ of $C$ for which at least one of $u_i$ or $u_{i+1}$ is a vertex of a missing interval for $w$.
If $|C|=2\floor{\frac{k-3}{2}}+3$, then we will embed $\h^*$ to $\h_{n,\floor{\frac{k-1}{2}},2}$  with $A=D_w\cup D_{w'}$, the unique missing interval goes to $B_1$ and all the remaining vertices are going to $L$.

\begin{summary}\label{sum2} \ 
If for $w,w'\in V(\h)\setminus V$ we have $D_w\not\subseteq D_{w'}$ and $D_{w'}\not\subseteq D_w$, then

\begin{enumerate}
    \item 
    there is no hyperedge  $h\in \h\setminus C$ with $h\in \h\setminus \h_{n,\floor{\frac{k-1}{2}}}$ or $h\in \h\setminus \h_{n,\floor{\frac{k-1}{2}},2}$ depending on whether $|V|=2\floor{\frac{k-3}{2}}+2$ or $|V|=2\floor{\frac{k-3}{2}}+3$; and
    \item
    if $u_{i-1},u_{i+1}\in D_w\cup D_{w'}$, then $e_{i-1}\setminus \{u_i\},e_i\setminus \{u_i\}\subseteq D_w\cup D_{w'}\cup I$, where $I$ is the unique possible interval $u_j,u_{j+1}$ of size two disjoint with $D_w\cup D_{w'}$. Furthermore, if $u_i$ is replaceable by either $w$ or $w'$, then $e_{i-1}\setminus \{u_i\},e_i\setminus \{u_i\}\subseteq D_w\cup D_{w'}$.
\end{enumerate}
\end{summary}

\begin{proof}
Note that every $u\in V\setminus (D_w\cup D_{w'})$ has a neighbor on $C$ in $D_w\cup D_{w'}$. Therefore, if $v\in h\in \h\setminus C$ with $v\in V(\h)\setminus V$, then Claim~\ref{replace} (i) yields $h\setminus \{v\}\subseteq D_w\cup D_{w'}$. So we only have to consider hyperedges $h\subset V$. If $u_i$ is replaceable by either $w$ or $w'$ and $u_i\in h\in \h\setminus C$, then Claim~\ref{replace2} (i) and (iv) yield $h\setminus \{u_i\}\subseteq D_w\cup D_{w'}$.
Finally, if $u_j,u_{j+1}$ form the unique interval of $V\setminus (D_w\cup D_{w'})$, and $u_i$ is neither replaceable by $w$ nor by $w'$, then one of $u_{i-1},u_{i+1}$ belong to $D_w$, the other to $D_{w'}$.  Suppose that $u_i,u_j\in h\in \h\setminus C$, the other case $u_i,u_{j+1}\in h\in \h\setminus C$ is symmetric. Then $u_{j-1}\in D_{w^*}$ and $u_{i-1}\in D_{w^{**}}$ for some $w^*,w^{**}\in \{w,w'\}$. Therefore $$w^*,h',u_{j-1},e_{j-2},\dots,u_{i+1},e_{i},u_i,h,u_j,e_j,u_{j+1},\dots,e_{i-2},u_{i-1},h'',w^{**}$$ is either a cycle (if $w^*=w^{**}$) or a path (if $w^*\neq w^{**}$)
of length $k$. Such distinct hyperedges $h',h''$ exist from the definition of $D_{w^*}, D_{w^{**}}$  as well as they are different from the hyperedge $h$ since $h \subset V$. This settles part 1.

For part 2, let us consider defining hyperedges $e_{i-1},e_i$ of $C$ with $u_{i-1},u_{i+1}\in D_w\cup D_{w'}$. Observe first that all but at most one of the $u_i$'s are replaceable either by $w$ or by $w'$. If $u_i$ is indeed replaceable by $w$ or by $w'$, then  Claim~\ref{replace2} (iii) yields $e_{i-1}\setminus \{u_i\},e_i\setminus \{u_i\}\subseteq D_w \cup D_{w'}$. For the at most one exception $u_i$, we have that one of $u_{i-1},u_{i+1}$ is in $D_w$, the other one is in $D_{w'}$ and by Claim~\ref{replace2} (v) we are done.
\end{proof}

\subsection{Case-by-case analysis}

We finish the proof with a case-by-case analysis according to the  length of the longest Berge-cycle  $C$ and subcases will be defined according to the size of $D_w$. Let us remind the reader that the length of the cycle $C$, $\ell-1$, might take the values $2\floor{\frac{k-3}{2}}$, $2\floor{\frac{k-3}{2}}+1$, $2\floor{\frac{k-3}{2}}+2$ or $2\floor{\frac{k-3}{2}}+3$, and in the last case $k$ is even. In each case, we will use the summaries from the previous subsection.

\medskip 

\textsc{\textbf{Case I}} $\ell-1=2\floor{\frac{k-3}{2}}$.

\smallskip

As $|D_w|\ge \floor{\frac{k-3}{2}}$, then by Claim~\ref{replace} (i), $D_w$ must consist of every second vertex of $V$, so there are no missing intervals.
Summary~\ref{sum1} implies $\h \subseteq \h_{n,\floor{\frac{k-3}{2}}}$ thus $$|\h|\le |\h_{n,\floor{\frac{k-3}{2}}}|<|\h_{n,\floor{\frac{k-3}{2}},3}|,$$ which contradicts the assumption on $|\h|$.

\medskip

\textsc{\textbf{Case II}} $\ell-1=2\floor{\frac{k-3}{2}}+1$.

\smallskip

$|D_w|\ge \floor{\frac{k-3}{2}}$ and Claim~\ref{replace} (i) imply that after a possible relabelling we have  $D_w=\{u_1,u_4,\dots,u_{2\floor{\frac{k-3}{2}}}\}$ and thus $\{u_{2},u_3\}$ is the only missing interval for $w$, and all other vertices in $V\setminus D_w$ are in $R_w$. As all vertices in $V\setminus D_w$ are neighbors to some vertex in $D_w$, by Summary~\ref{sum1}, all  hyperedges in $\h \setminus C$ belong to $\h_{n,\floor{\frac{k-3}{2}},2}$. 

To consider the defining hyperedges of $C$, let us analyze those that contain an $u_i \in R_w$. Observe that by Claim \ref{replace} (i)  a vertex in $D_w$ cannot be a neighbor on $C$ of a vertex in $D_{w'}$ for some $w' \in V(\h) \setminus V$ with $w'\neq w$. This implies that $D_{w'}\subseteq\{u_1,u_4,\dots, u_{2\floor{\frac{k-3}{2}}}\}$. Since $|D_{w'}|\ge \floor{\frac{k-3}{2}}$, we have $D_w=D_{w'}$ for any two $w,w'\in V(\h) \setminus V$. We apply Claim~\ref{replace} (ii) to replace $u_i$ by $w$. Then $D_{u_i}$ becomes equal to $D_{w'}$, thus contains $u_1$ and $u_4$. By Claim~\ref{replace} (i), $u_2$ and $u_3$ are not in $N_{\h\setminus C'}(u_i)$, in particular they are not in $e_{i-1}$, nor in $e_i$.

We obtained that $e_{i-1}$ and $e_i$ cannot contain any of $u_{2}$ and $u_3$.
Therefore, in Summary~\ref{sum1} possibility 3 holds, hence we have that $\h^*\subseteq \h_{n,\floor{\frac{k-3}{2}},2}$, thus $$|\h|\le |\h_{n,\floor{\frac{k-3}{2}},2}|+3< |\h_{n,\floor{\frac{k-3}{2}},3}|,$$ which contradicts the assumption on $|\h|$.

\medskip

\textsc{\textbf{Case III}} $\ell-1=2\floor{\frac{k-3}{2}}+2$.

\smallskip

The three subcases below cover this case.

\medskip

\textsc{\textbf{Case III/A}} There exists $w\in V(\h)\setminus V$ with $|D_w|=\floor{\frac{k-3}{2}}+1$.

\smallskip

Then there is no missing interval for $w$, and so $V\setminus D_w \subseteq R_w$, so by Summary \ref{sum1} we have $\h=\h^*\subseteq \h_{n,\floor{\frac{k-1}{2}}}$. 

\medskip

\textsc{\textbf{Case III/B}} There exists $w\in V(\h)\setminus V$, for which there are  two missing intervals, $\{u_i,u_{i+1}\}$ and $\{u_j,u_{j+1}\}$.

\medskip 

Note that there is no type 1 hyperedge of $\h\setminus C$, as each vertex of the missing intervals is terminal. Observe that all the vertices in $V\setminus D_w$ have neighbors in $D_w$, therefore the fact that $|D_{w'}|\ge \floor{\frac{k-3}{2}}$, together with $|D_{w}|= \floor{\frac{k-3}{2}}$ and Claim~\ref{replace}~(i) imply $D_w=D_{w'}$ for all $w,w'\in V(\h)\setminus V$.
This enables us to conclude that

\smallskip 

- by Claim~\ref{2intervals} (i), there is no hyperedge $h\in \h\setminus C$ of type 2; and

\smallskip 

- by Claim~\ref{replace2} (v), if $u_l\in R_w$, then $e_{l-1},e_l$ do not contain vertices of missing intervals. 

\medskip  

So by Summary \ref{sum1} we have $\h^*\subseteq \h_{n,\floor{\frac{k-3}{2}},2,2}$ and thus $$|\h|\le |\h_{n,\floor{\frac{k-3}{2}},2,2}|+6<|\h_{n,\floor{\frac{k-3}{2}},3}|,$$ contradicting the assumption on $|\h|$.

\medskip

\textsc{\textbf{Case III/C}} For all $w\in V(\h)\setminus V$, there is only one missing interval containing three vertices $\{u_{i(w)},u_{i(w)+1},u_{i(w)+2}\}$.

\smallskip

If there exist two vertices $w,w' \in V(\h) \setminus V$ with $i(w)\neq i(w')$, then $D_w\cup D_{w'}$ must contain every second vertex of $C$. So by Summary \ref{sum2}, we have $\h\subseteq \h_{n,\floor{\frac{k-1}{2}}}$ as claimed by the theorem.

So we can assume that the $D_w$s are the same and without loss of generality suppose that for every  $w\in V(\h)\setminus V$, the missing interval is $\{u_1,u_2,u_3\}$. Moreover, as every replaceable vertex $u_i$ is replaceable by any $w\in V(\h)\setminus V$, replaceable vertices and defining hyperedges $e_{i-1},e_i$ behave as vertices in $V(\h)\setminus V$ and hyperedges in $\h\setminus C$. By Summary \ref{sum1} and the above, we have to deal with type 1 hyperedges of $\h \setminus C$ and $\h\setminus \h^*=\{e_{2\floor{\frac{k-3}{2}}+2},e_1,e_2,e_3\}$.

$\bullet$ At first suppose that there exists a type 1 hyperedge of $\h \setminus C$, i.e., $h\in \h\setminus C$ with $v,u_2\in h$ for some $v\in (V(\h)\setminus V)\cup R_w$. Without loss of generality, we may assume $v\in V(\h)\setminus V$. Then we claim that there is no hyperedge $h'\in \h$ with $u_1,u_3\in h'$. Suppose by a contradiction that such $h'$ exists, then observe that $h'\neq h$, as otherwise, we would have $v,u_1,u_3\in h'$ that is not possible by Summary~\ref{sum1}. Also, either $h'\notin \{e_1,e_3\}$ or $h'\notin \{e_{2\floor{\frac{k-3}{2}}+2},e_2\}$, so we may assume $h'\notin \{e_1,e_3\}$ without loss of generality. 
Since $u_{2\floor{\frac{k-3}{2}}+2}\in D_v$, there is a hyperedge $h''$ different from the hyperedges $h$ and $h'$, incident to the vertices $v$ and $u_{2\floor{\frac{k-3}{2}}+2}$. We have a contradiction since the following is a longer Berge-cycle than $C$, containing all defining vertices of $C$ and $v$: $$v,h,u_2,e_1,u_1,h',u_3,e_3,u_4,\cdots,u_{2\floor{\frac{k-3}{2}}+2},h''. $$
As no hyperedge contains both $u_1$ and $u_3$, we obtained $\h\subseteq \h_{n,\floor{\frac{k-1}{2}}}$ in this case. 

$\bullet$ Suppose next that there is no type 1 hyperedge of $\h \setminus C$, i.e., by Summary \ref{sum1}, we have $\h^*\subseteq \h_{n,\floor{\frac{k-3}{2}},3}$. Observe that $e_{2\floor{\frac{k-3}{2}}+2}$ and $e_3$  do not contain vertices from $(V(\h)\setminus V) \cup R_w$ by Claim~\ref{replace3} (i) and (ii). If the same holds for $e_1,e_2$, then $\h\subseteq \h_{n,2\floor{\frac{k-3}{2}},3}$ contradicting the assumption $|\h|>| \h_{n,2\floor{\frac{k-3}{2}},3}|$. So we can assume that $e_2$ contains a vertex $v \in (V(\h)\setminus V) \cup R_w$. Then we claim that there is no hyperedge $h\in \h\setminus C$ with $u_1,u_3\in h$. Here we get a contradiction as in the previous settings with a longer Berge-cycle, therefore we omit the proof. 
We obtained the following contradiction $$|\h|\le 2+|\h_{n,\floor{\frac{k-3}{2}},3}|-\binom{\floor{\frac{k-3}{2}}}{r-2}<|\h_{n,\floor{\frac{k-3}{2}},3}|.$$

\medskip

\textsc{\textbf{Case IV}} $\ell-1=2\floor{\frac{k-3}{2}}+3$.

\smallskip

Note that in this case $k$ is even and the length of $C$ is $k-1$. We again distinguish several subcases.

\textsc{\textbf{Case IV/A}} 
 $|D_w|=\floor{\frac{k-1}{2}}=\frac{k}{2}-1$.

\smallskip

Then as $D_w$ does not contain neighboring vertices on $C$, after relabelling, we can suppose that we have $D_w=\{u_1,u_4,u_6,\dots,u_{k-2}\}$. So there is one missing interval $\{u_2,u_3\}$, therefore there does not exist a type 1 or type 2 hyperedge $h\in\h\setminus C$. If $u_i\in R_w$, then by Claim~\ref{replace2}~(iii) $e_{i-1}$ and $e_i$  do not contain vertices from $V(\h)\setminus V$. We claim that $e_{i-1}$ and $e_i$ do not contain vertices from the missing interval $\{u_2,u_3\}$. Indeed, if there exists $w^*\neq w$ with $u_1\in N_{\h\setminus C}(w^*)$ and $u_2\in e_i$ or $e_{i-1}$, then the following is a Berge-path of length $k$: $$u_i, e_i \textrm{ (or } e_{i-1} \textrm{), } u_2,e_2,u_3,\dots, u_{i-1},h,w,h',u_{i+1},e_{i+1},\dots,u_{k-1},e_{k-1},u_1,h'',w^*.$$ Here $h$ and $h'$ exist and are distinct as $u_i$ is in $R_w$ and $h''$ exists by the choice of $w^*$.

Similarly, if there exists $w^{**}\neq w$ with $u_4\in N_{\h\setminus C}(w^{*})$, then $e_{i-1},e_i$ cannot contain $u_3$. As all $D_{w^*}$ is of size at least $\floor{\frac{k-3}{2}}$, the only cases when we are not yet done is when $u_1\notin N_{\h\setminus C}(w^*)$ and $D_{w^*}=\{u_4,u_6,\dots,u_{2\floor{\frac{k-3}{2}}+2}\}$ or $u_4\notin N_{\h\setminus C}(w^*)$ and $D_{w^*}=\{u_6,u_8,\dots,u_{2\floor{\frac{k-3}{2}}+2},u_1\}$. By symmetry, we can assume the first. But then any replaceable $u_i$ but $u_{2\floor{\frac{k-3}{2}}+3}$ can be replaced with some $w^*\neq w$, and the above arguments  applied to the new cycle $C'$ show that any $u_i\in h\in \h\setminus C'$ (in particular, it applies to $e_i$ and $e_{i-1}$!) cannot contain $u_3$, and by Summary~\ref{sum1}, we already know that $e_{i-1},e_i$ cannot contain $u_{2\floor{\frac{k-3}{2}}+3}$. Therefore setting $A=D_w\setminus \{u_1\}$, $B_1=\{u_{2\floor{\frac{k-3}{2}}+3},u_1,u_2,u_3\}$ we have that $\h$ is a subfamily of $\h_{n,\floor{\frac{k-3}{2}},4}$ apart from $e_{2\floor{\frac{k-3}{2}}+2},e_{2\floor{\frac{k-3}{2}}+3},e_1,e_2,e_3$ and the hyperedges containing both $w$ and 
$u_1$. On the other hand, there cannot exist $h\in \h\setminus C$ with $u_{2\floor{\frac{k-3}{2}}+3},u_2\in h$ nor with $u_{2\floor{\frac{k-3}{2}}+3},u_3\in h$ as in the former case
$$w,h',u_1,e_{2\floor{\frac{k-3}{2}}+3},u_{2\floor{\frac{k-3}{2}}+3},h,u_2,e_2,\dots,u_{2\floor{\frac{k-3}{2}}+2},h'',w^*,$$
while in the latter case
$$w,h',u_1,e_1,u_2,e_2,u_3,h,u_{2\floor{\frac{k-3}{2}}+3},e_{2\floor{\frac{k-3}{2}}+2},u_{2\floor{\frac{k-3}{2}}+2},\dots,e_4,u_4,h'',w^*$$
is a Berge-path of length $k$. So we have $$|\h|\le |\h_{n,\floor{\frac{k-3}{2}},4}|+5+\binom{\floor{\frac{k-3}2}}{r-2}-2\binom{\floor{\frac{k-3}2}}{r-2}<|\h_{n,\floor{\frac{k-3}{2}},4}|,$$ contradicting the assumption on $|\h|$. So we obtained that $e_{i-1},e_i$ cannot contain $u_2,u_3$
and thus so far by Summary~\ref{sum1} we have $\h^* \subseteq \h_{n,\floor{\frac{k-1}{2}},2}$.

Now let us concentrate on the hyperedges in $\h \setminus \h^*$. So $\{u_2,u_3\}$ is the unique missing interval (all other vertices of $V\setminus D_w$ are in $R_w$), and thus $\h\setminus \h^*$ contains three hyperedges: $e_1,e_2$ and $e_3$. Observe that by Claim~\ref{replace3} (i), $e_1$ and $e_3$ do not contain any $w' \in V(\h) \setminus V$. By Claim~\ref{replace2} (v), $e_1$ and $e_3$ do not contain any vertex in $R_w$.  

\smallskip 

$\bullet$ If  $e_2$ does not contain any vertex in $R_w\cup (V(\h)\setminus V)$, then we are done, since  $\h\subseteq \h_{n,\floor{\frac{k-1}{2}},2}$. 

\smallskip 

$\bullet$ If $e_2$ does contain a vertex from $R_w\cup (V(\h)\setminus V)$, then there does not exist any other hyperedge $h$ that contains both $u_2$ and $u_3$. Indeed, if $e_2$ contained $w$, then $w$ could be inserted in between $u_2$ and $u_3$ in the Berge-cycle $C$ to form a longer cycle than $C$, a contradiction. If $e_2$ contains some $w'\neq w $ from $V(\h)\setminus V$, then we can reach a contradiction as before: we would find a Berge-path of length $k$ starting with $w',e_2,u_2,h,u_3$, then going through $C$ and ending with $u_1,h',w$ as $u_1\in D_w$. 

Finally, if $e_2$ contains a replaceable $u_i$, then at least one of $u_1,u_4$ belongs to $D_{w'}$ for some $w' \in  V(\h)\setminus V$ with $w'\neq w$, since $D_{w'} \subseteq D_w$ from  Claim \ref{replace} (i) and $|D_{w} \setminus D_{w'}|\leq 1$. By symmetry, we may assume that $u_1\in D_{w'}$. Then we have a contradiction since the following Berge-path has length $k$. The Berge-path is $u_i,e_2,u_2,h,u_3,u_4,\dots$ that goes around the cycle $C$, replaces $u_i$ by $w$ and finishes with $u_1, h_{w'}, w'$, such $h_{w'}$ exists from the definition of $D_{w'}$. Therefore, if $e_2$ does contain a vertex from $R_w\cup (V(\h)\setminus V)$, then there does not exist any other hyperedge $h$ that contains both $u_2$ and $u_3$. Hence, $\h\subseteq \h^+_{n,\floor{\frac{k-1}{2}}}$ with $A=D_w$, $L = V(\h) \setminus D_w$ and $e_2$ being the unique hyperedge of $\h^+_{n,\floor{\frac{k-1}{2}}}$ that contains less than $r-1$ vertices of $A$.

\medskip

\textsc{\textbf{Case IV/B}} For all $w'\in V(\h)\setminus V$, we have $|D_w'|=|D_w|=\floor{\frac{k-3}{2}}$.

\smallskip

As the length of $C$ is $k-1$, $k$ is even  and vertices of $D_w$ are not neighbors on $C$, we have at most three missing intervals. If there are three missing intervals, then each of them contains two vertices. If there are two missing intervals, then they contain two and three vertices and if there is only one missing interval, then it contains 4 vertices. According to this structure, we are going to consider the following three subcases.

\smallskip

-

\medskip 

\textsc{\textbf{Case IV/B/1}} There exists $w \in V(\h) \setminus V$ with $V\setminus D_w$ containing 3 intervals of length 2.

\smallskip 

Observe that as all the missing intervals are of size 2, we do not have type 1 hyperedges $h\in \h\setminus C$. As all vertices in $V\setminus D_w$ have neighbors in $D_w$, we obtain that for any $w'\in V(\h)\setminus V$ we have $D_w=D_{w'}$. So Claim~\ref{2intervals} (i) implies that there does not exist any type 2 hyperedges $h\in \h\setminus C$. Finally, Claim~\ref{replace2} (v) implies that defining hyperedges of $C$, apart from those in $\h\setminus \h^*$, are in $\h_{n,\floor{\frac{k-3}{2}},2,2,2}$. So we obtained a contradiction as $$|\h|\le 9+|\h_{n,\floor{\frac{k-3}{2}},2,2,2}|<|\h_{n,\floor{\frac{k-3}{2}},4}|.$$

\medskip

\textsc{\textbf{Case IV/B/2}} For all $w\in V(\h)\setminus V$, the number of missing intervals is at most 2 and there exist $w,w' \in V(\h) \setminus V$ with $D_w\neq D_{w'}$.

\smallskip

By relabeling, we can assume that $\{ u_2,u_3 \}$ forms the unique missing interval for both $w$ and $w'$, i.e., the unique interval of length more than 1 in $V\setminus (D_w\cup D_{w'})$. 
According to Summary \ref{sum2}, if every $u_i\notin D_w\cup D_{w'}\cup \{u_2,u_3\}$ is replaceable, then we have $\h \setminus \{ e_1,e_2,e_3\} \subseteq \h_{n,\floor{\frac{k-1}{2}},2}$, while if there is $u_i\in V\setminus (D_w\cup D_{w'})$ ($i\neq 2,3$) that is not in $R_w \cup R_{w'}$, then we know $e_{i-1}\setminus \{u_i\},e_i\setminus \{u_i\}\subseteq D_w\cup D_{w'}\cup\{u_2,u_3\}$.

\smallskip 

$\bullet$ At first we suppose that there exists a $u\in D_w\cup D_{w'}$ such that $|\{w^*\in (V(\h)\setminus V)\cup R_w\cup R_{w'}: u\in N_{\h\setminus C}(w^*)\}|=1$. In that case, the unique $w^*$ must be either $w$ or $w'$, say $w$. Consider the hypergraph $\h\setminus \h_{n,\floor{\frac{k-3}{2}},3}$ with $\h_{n,\floor{\frac{k-3}{2}},3}$ having $A=D_w\cup D_{w'}\setminus \{u\}=D_{w'}$ and $B_1=\{u,u_2,u_3\}$. Then, by Summary \ref{sum2}, the hyperedges left are incident with the vertex $u$, thus the number of hyperedges is at most $\binom{\floor{\frac{k-3}{2}}}{r-2}+5$. Here the first term is an upper bound for those hyperedges that are incident with  both $u$ and $w$, while the second term is 5 for $\{e_{i-1},e_i,e_1,e_2,e_3\}$. 
So we have a contradiction as $$|\h|\le \binom{\floor{\frac{k-3}{2}}}{r-2}+5+|\h_{n,\floor{\frac{k-3}{2}},3}|<|\h_{n,\floor{\frac{k-3}{2}},4}|.$$

\smallskip 

$\bullet$ Suppose now that for all $u\in D_w\cup D_{w'}$, $|\{w^*\in (V(\h)\setminus V)\cup R_w\cup R_{w'}: u\in N_{\h\setminus C}(w^*)\}|\geq 2$. 
At first we show that  $u_2,u_3\notin e_{i-1},e_i$ if $u_i\in V\setminus (D_w\cup D_{w'})$ ($i\neq 2,3$). This holds by Summary \ref{sum2}, if $u_i$ is replaceable by either $w$ or $w'$.
Therefore without loss of generality we may assume $u_{i+1}\in D_w\setminus D_{w'}$ and $u_{i-1}\in D_{w'}\setminus D_w$.
Note that $D_w=(D_w\cup D_{w'})\setminus \{u_{i-1}\}$ and $D_{w'}=(D_w\cup D_{w'})\setminus \{u_{i+1}\}$. 
Because of symmetry, it is enough to show a contradiction only if $u_2 \in e_i$, the three remaining cases are similar to this one. The following is a Berge-path of length $k$

\[
u_i,e_i,u_2,e_2,u_3,e_3,\dots,u_{i-1},h,w',h',u_1,e_{k-1},u_{k-1},e_{k-2}\dots, e_{i+1},u_{i+1},h'',w,
\]  
a contradiction. The hyperedges $h,h',h''$  can be chosen distinct as $u_1,u_{i-1}\in D_{w'}$ and $u_{i+1}\in D_w$ and by Lemma \ref{structure} (i), $h^*\in \h\setminus C$ cannot contain distinct vertices from outside $V$.

By Claim~\ref{replace3} (i) and (ii), $e_1$ and $e_3$ are not incident with vertices in $V(\h) \setminus V$ or in $R_w \cup R_{w'}$. Even more, they are not incident with $u_i$ either, since otherwise if $u_i \in e_1$, the following path is of length $k$, a contradiction: 
\[
u_i,e_1,u_2,e_2,u_3,e_3,\dots,u_{i-1},h,w',h',u_1,e_{k-1},u_{k-1},\dots, e_{i+1},u_{i+1},h'',w.
\]  
An analogous argument shows $u_i \notin e_3$. 

Finally, if $e_2$ contains any vertex from $V(\h)\setminus (D_w\cup D_{w'})$, then similarly to previous cases a hyperedge $e_2\neq h\in \h$ containing both $u_2,u_3$ would lead to a Berge-path of length $k$. So if no such hyperedge $h$ exists, then $\h\subseteq \h_{n,\floor{\frac{k-1}{2}},2}$. Otherwise, we have $\h\subseteq \h_{n,\floor{\frac{k-1}{2}}}^+$. Both possibilities are as claimed by the theorem.

\smallskip

\textsc{\textbf{Case IV/B/3}} For all $w'\in V(\h)\setminus V\cup R_w$, the number of missing intervals is at most 2 and  $D_w= D_{w'}$.

\smallskip

As $D_w=D_{w'}$ for all $w,w' \in V(\h) \setminus V$, it follows that we do not have to distinguish between vertices in $V(\h)\setminus V$ and vertices in $R_w$. Also, anything that we prove for hyperedges $h\in \h\setminus C$ is valid for all $e_i,e_{i-1}$ if $u_i\in R_w$, by Claim \ref{replace} (ii). 

\smallskip 

\textsc{\textbf{Case IV/B/3/1}}  Let us consider first the case when for every $v\in V(\h)\setminus V \cup R_w$,  the missing intervals for $v$ are $\{ u_2,u_3,u_4 \}$ and $\{ u_i,u_{i+1} \}$ for some $6\le i\le k-2$, after possible relabeling. By Summary \ref{sum1} and Claim~\ref{2intervals} (i), we need to consider the 7 hyperedges in $\h\setminus \h^*$, the hyperedges in $\h \setminus C$ containing $u_3,u_i$ or $u_3,u_{i+1}$ and the hyperedges in $\h \setminus C$ containing $u_3$ and some $v\in V(\h)\setminus V\cup R_w$. 

\medskip 

$\bullet$ If there are no hyperedges in $\h \setminus C$ containing $u_3,u_i$ or $u_3,u_{i+1}$ or  $u_3$ and some $v\in V(\h)\setminus V\cup R_w$,  then $\h^*\subseteq \h_{n,\floor{\frac{k-3}{2}},3,2}$, with embedding $A=D_w$, $B_1=\{u_2,u_3,u_4\}$, $B_2=\{u_i,u_{i+1}\}$ and $$|\h|\le |\h^*|+7\le |\h_{n,\floor{\frac{k-3}{2}},3,2}|+7<|\h_{n,\floor{\frac{k-3}{2}},4}|,$$ contradicting the assumption on $|\h|$. 

\smallskip 

$\bullet$ 
If there are no hyperedges in $\h \setminus C$ containing $u_3$ and some $v\in V(\h)\setminus V\cup R_w$, but there exist a hyperedge $h\in \h \setminus C$ containing $u_3,u_i$ or $u_3,u_{i+1}$, then by Claim~\ref{2intervals} (ii), there is no hyperedge containing both $u_2$ and $u_4$. In particular, with embedding $A=D_w$, $B_1=\{u_2,u_3,u_4\}$, $B_2=\{u_i,u_{i+1}\}$ we have $|\h_{n,\floor{\frac{k-3}{2}},3,2}\setminus \h|\ge \binom{\floor{\frac{k-3}{2}}}{r-2}$.
Also, by Summary \ref{sum1}, the hypergraph $\h \setminus \h_{n,\floor{\frac{k-3}{2}},3,2}$ may contain the $7$ hyperedges of $\h\setminus \h^*$ and at most $2\binom{\floor{\frac{k-3}{2}}}{r-2}+\binom{\floor{\frac{k-3}{2}}}{r-3}$ hyperedges containing $u_i$ or/and $u_{i+1}$ and $u_3$. So we have
\[
|\h|\le |\h_{n,\floor{\frac{k-3}{2}},3,2}|+7+2\binom{\floor{\frac{k-3}{2}}}{r-2}+\binom{\floor{\frac{k-3}{2}}}{r-3}-\binom{\floor{\frac{k-3}{2}}}{r-2}<|\h_{n,\floor{\frac{k-3}{2}},4}|,
\]
which contradicts the assumption on $|\h|$. 
\smallskip

$\bullet$ Suppose that there is a hyperedge $h \in \h\setminus C$ containing $u_3$ and some $v\in V(\h)\setminus V\cup R_w$. There is no $h'\in \h\setminus C$ incident with $u_2$ and $u_4$. 
Indeed, otherwise $$v,u_3,e_2,u_2,h',u_4,e_4, \dots,u_1,h_w,w$$ is a Berge-path of length $k$, a contradiction.   

By the above, Summary \ref{sum1} and Claim \ref{2intervals} (i), we have that $\h^* \subseteq \h_{n,\floor{\frac{k-1}{2}},2}$  with embedding $A=D_w\cup\{u_3\}$, $B_1=\{u_i,u_{i+1}\}$. Even more, since $D_v=D_w\ni u_1,u_5$, by Lemma \ref{structure} (iii) there exist cycles $C_2,C_4$ with $v$ replacing $u_2$ and $u_4$, respectively. Observe that the set $D_{w^*}$ does not change when we apply these changes from $C$ to $C_2$ and $C$ to $C_4$. In $C_2$, $e_1,e_2$ are not defining hyperedges, while in $C_4$, $e_3,e_4$ are not defining hyperedges. Therefore, applying Lemma \ref{structure} (ii), we obtain that $e_1,e_2$ do not contain $u_4,u_i,u_{i+1}$ and $e_3,e_4$ do not contain $u_2,u_i,u_{i+1}$. Hence hyperedges $e_1,e_2,e_3,e_4$ are also from $\h_{n,\floor{\frac{k-1}{2}},2}$ by Summary \ref{sum1}. By Claim \ref{replace3} (i) and Claim~\ref{2intervals}~(i), we have that the hyperedges $e_{i-1}$ and $e_{i+1}$ are also from $\h_{n,\floor{\frac{k-1}{2}},2}$. Finally, if $e_i$ does not contain any vertex from $(V(\h)\setminus V)\cup R_w\cup \{u_3\}$, then we have $\h\subseteq \h_{n,\floor{\frac{k-1}{2}},2}$. Otherwise, as in Case IV/A, one can see that there does not exist $h\neq e_i$ with $u_i,u_{i+1}\in h$ and thus $\h=\h_{n,\floor{\frac{k-1}{2}}}^+$ with $A=D_w\cup\{u_3\}$ and  $e_i$ being the unique hyperedge with less than $r-1$ elements in $A$.

\smallskip 

\textsc{\textbf{Case IV/B/3/2}} For all $v\in V(\h)\setminus V\cup R_w$, the only missing interval consists of $\{ u_2,u_3,u_4,u_5 \}$, after possible relabelling.

\smallskip

By Summary \ref{sum1}, we need to handle hyperedges $e_1,e_2,e_3,e_4,e_5$ and those $h\in \h \setminus C$  that contain a $v\in V(\h)\setminus V \cup R_w$ and $u_3$ and/or $u_4$. 

\smallskip 

$\bullet$ If there are no such hyperedges and $e_1,e_2,e_3,e_4,e_5 \subseteq D_w\cup \{u_2,u_3,u_4,u_5\}$, then $\h\subseteq \h_{n,\floor{\frac{k-3}{2}},4}$ contradicting the assumption on $|\h|$.

\smallskip 

$\bullet$ Suppose next there is no $h\in \h\setminus C$ with a vertex from $V(\h) \setminus V \cup R_w$ containing $u_3$ or $u_4$, but some $e_i$ ($i=1,2,3,4,5$) does contain a vertex from outside $V$. By Claim~\ref{replace3} (i), it is  neither $e_1$ nor $e_5$. If $e_i$ contains a vertex $v$ from outside $V$, then there cannot exist $h\in \h\setminus C$ with $u_2,u_{i+1}\in h$, as then
$$v,e_i,u_i,e_{i-1},\dots,u_2,h,u_{i+1},e_{i+1},u_{i+2},e_{i+2},\dots,u_{k-1},e_{k-1},u_1,h'$$ is a Berge-cycle of length $k$. For the existence of $h'$ we used $D_v=D_w\ni u_1$. Therefore we have $$|\h|\le 3+|\h_{n,\floor{\frac{k-3}{2}},4}|-\binom{\floor{\frac{k-3}{2}}}{r-2}<|\h_{n,\floor{\frac{k-3}{2}},4}|,$$ contradicting the assumption on $|\h|$.

$\bullet$  If there exists  a hyperedge $h \in \h\setminus C$ incident with some vertex $v \in V(\h)\setminus V \cup R_w$ and $u_3$,  then there is no $h'\neq h$,  $h' \in \h \setminus C$ incident with some vertex from $V(\h)\setminus V \cup R_w$ and $u_4$,  by Claim~\ref{replace} (i). 
Even more, there  is no $h''\in \h\setminus C$ with $u_2,u_4\in h''$. The argument is the same as if $e_3$ contained $v$ from the previous bullet. Similarly one can get that there is no hyperedge  $h''\in \h\setminus C$ with $u_2,u_5\in h''$. 

Observe that there should exist at least two distinct $v_1,v_2\in V(\h)\setminus V \cup R_w$ for which hyperedges $h_{v_1},h_{v_2}$ with $v_1,u_3\in h_{v_1}$ and $v_2,u_3\in h_{v_2}$ 
exist. Indeed, otherwise using that there is no non-defining edge incident to $u_2,u_4$, we have
\[
|\h|\le 5+1+|\h_{n,\floor{\frac{k-3}{2}},4}|-\binom{\floor{\frac{k-3}{2}}}{r-2}<|\h_{n,\floor{\frac{k-3}{2}},4}|.
\]

We will show that either $\h\subseteq \h_{n,\floor{\frac{k-1}{2}},2}$ or $\h\subseteq \h_{n,\floor{\frac{k-1}{2}}}^+$ with $A=D_w\cup \{u_3\}$ and $B_1=\{u_4,u_5\}$. Let 
$w^*$ denote an arbitrary vertex $w^*\in V(\h)\setminus V$ with $w^*\neq v_1,v_2$. We will use that $u_1,u_6\in D_w=D_{w^*}=D_{v_1}=D_{v_2}$ thus there exists a hyperedge that is not a defining hyperedge of $C$ and is different from $h_{v_1}$ and $h_{v_2}$, containing either $u_1$ or $u_6$ together with $v_1$ or $v_2$ or $w^*$.

 We need to prove that $u_4,u_5\notin e_1,e_2$ and $u_2\notin e_3,e_5$. In each of the cases, we present a Berge-path of length $k$ below, which is a contradiction.

\medskip 

If $u_4\in e_1$, then the path is $$v_1,h_{v_1},u_3,e_2,u_2,e_1,u_4,e_4,u_5,\dots,u_{k-1},e_{k-1},u_1,h,w^*.$$

If $u_4\in e_2$, then the path is
$$u_2,e_2,u_4,e_4,u_5,e_5,\dots,u_{k-1},e_{k-1},u_1,h,v_1,h_{v_1},u_3,h_{v_2},v_2.$$

If $u_5\in e_1$ or $e_2$ , then the path is $$u_2,e_1 \mbox { or } e_2,u_5,e_4,u_4,e_3,u_3,h_{v_1},v_1,h,u_6,e_6,\dots,u_{k-1},e_{k-1},u_1,h',w^*.$$ 

If $u_2\in e_3$, then the path is $$v_1,h_{v_1},u_3,e_2,u_2,e_3,u_4,e_4,u_5,\dots,u_{k-1},e_{k-1},u_1,h,w^*.$$

If $u_2\in e_5$, then the path is $$u_2,e_5,u_5,e_4,u_4,e_3,u_3,h_{v_1},v_1,h,u_6,e_6,\dots,u_{k-1},e_{k-1},u_1,h',w^*.$$

 From here, one can conclude to $\h\subseteq \h_{n,\floor{\frac{k-1}{2}},2}$ or $\h\subseteq \h_{n,\floor{\frac{k-1}{2}}}^+$ as in Case IV/A, depending on whether $e_4\subseteq D_w\cup \{u_3,u_4,u_5\}$ or not. 

\bigskip 
   
The above case-by-case analysis concludes the proof of Theorem~\ref{deg2} under the set degree condition, i.e., for any set $X$ of vertices with $|X|\le k/2$ the number of hyperedges  incident with some vertex in $X$, $|E(X)|$, is at least $|X|\binom{\lfloor \frac{k-3}{2} \rfloor }{r-1}$.
\end{proof}

\section{Proof of Theorem~\ref{deg2} and Theorem~\ref{general}}
Let $n'_{k,r}$ denote the threshold such that the statement of Theorem 6 holds for hypergraphs with the set degree condition if $n\ge n'_{k,r}$. We are now ready to prove the general statements.

\begin{proof}[Proof of Theorem~\ref{deg2} and Theorem~\ref{general}]
\bigskip
 Let $\h$ be a connected  $n$-vertex  $r$-uniform hypergraph without a Berge-path of length $k$, and  suppose that if $k$ is odd, then
\begin{equation*}
    |\h|>|\h_{n,\lfloor \frac{k-3}{2}\rfloor,3}| =\left(n- \frac{k+3}{2}\right) \binom{\lfloor \frac{k-3}{2} \rfloor }{r-1}+\binom{\lfloor \frac{k+3}{2} \rfloor}{r},
\end{equation*}
while if $k$ is even, then 
\begin{equation*}
    |\h|>|\h_{n,\lfloor \frac{k-3}{2}\rfloor,4}| =\left(n- \floor{\frac{k+5}{2}}\right) \binom{\lfloor \frac{k-3}{2} \rfloor }{r-1}+\binom{\lfloor \frac{k+5}{2} \rfloor}{r}.
\end{equation*}

 We obtain a sub-hypergraph $\h'$ of $\h$ using a standard greedy process: as long as there exists a set $S$ of vertices with $|S|\le k/2$ such that $|E(S)|< |S| \binom{\floor{\frac{k-3}{2}}}{r-1}$, we remove $S$ from $\h$ and all hyperedges in $E(S)$. Let $\h'$ denote the sub-hypergraph at the end of this process.
 \begin{claim}\label{subhyp}
 There exists a threshold 
 $n''_{k,r}$,  such that if $|V(\h)| \ge n''_{k,r}$, then $\h'$ is connected and contains at least $n'_{k,r}$ vertices.
 \end{claim}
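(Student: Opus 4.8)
The plan is to control how much the greedy peeling process can remove and to argue that it cannot disconnect $\h$, by exploiting the hypothesis $|\h| > |\h_{n,\lfloor \frac{k-3}{2}\rfloor,3}|$ (or the even analogue), which is linear in $n$ with leading coefficient $\binom{\lfloor \frac{k-3}{2}\rfloor}{r-1}$. First I would bound the total number of hyperedges removed during the process: each step removes a set $S$ with $|S|\le k/2$ together with fewer than $|S|\binom{\lfloor \frac{k-3}{2}\rfloor}{r-1}$ hyperedges, so if $m$ vertices are removed in total, at most $m\binom{\lfloor \frac{k-3}{2}\rfloor}{r-1}$ hyperedges are removed. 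Combining this with the lower bound on $|\h|$ gives $|\h'| \ge |\h| - m\binom{\lfloor \frac{k-3}{2}\rfloor}{r-1} > (n-m)\binom{\lfloor \frac{k-3}{2}\rfloor}{r-1} - O_{k,r}(1)$. Since $\h'$ has $n-m$ vertices, this already forces $n - m$ to be large once $n$ is large (if $n-m$ were bounded by a constant $c$, then $|\h'|\le \binom{c}{r}$, contradicting the displayed inequality for $n$ large), which gives the ``at least $n'_{k,r}$ vertices'' conclusion for a suitable $n''_{k,r}$.

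Next I would establish connectivity of $\h'$. The key observation is that the greedy process, at each step, removes a vertex set $S$ with $|S| \le k/2$, and we should argue the removed part is ``small'' relative to the whole. Suppose toward a contradiction that after peeling, $\h'$ breaks into at least two components, or more precisely that at some step the removal of $S$ disconnects the current hypergraph $\h_j$ into pieces none of which is trivially absorbed. I would instead track connectivity forward: as long as the current hypergraph $\h_j$ has more than $k/2$ vertices and has minimum ``set-degree'' violated only on small sets, removing such a set $S$ from a connected hypergraph can only disconnect it if $S$ is a cutset; but a component cut off by $S$ that contains a vertex $v$ must be joined to the rest through $S$, and since $\h$ is connected and has many hyperedges, any such small component together with $S$ would itself have few hyperedges relative to its size, so it would have been peeled entirely. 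More carefully: if a component $K$ of $\h_j\setminus E(S)$ satisfies $|V(K)|\le k/2 - |S|$, then $S\cup V(K)$ is a set of size $\le k/2$, and I would show $|E(S\cup V(K))|$ is small (all its hyperedges lie within $S\cup V(K)$ plus the at most $|S|\binom{\lfloor\frac{k-3}{2}\rfloor}{r-1}$ from $E(S)$), so the process would continue peeling it; hence every surviving component has more than $k/2 - |S| \ge k/2 - k/2$ vertices, and iterating, the bulk stays together. The cleanest route is: show by induction that throughout the process the current hypergraph has at most one component of size exceeding $k/2$, and that each peeling step keeps this invariant; at the end, the unique large component is $\h'$ (everything smaller having been removed), which is connected.

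The main obstacle I anticipate is the connectivity argument, specifically handling the case where removing $E(S)$ splits the hypergraph into several components each of moderate size — one needs to verify that the greedy rule will eventually consume all but one of them. The subtlety is that a component of size between $k/2-|S|$ and, say, $k$ could fail to be directly peelable as a single set because its size exceeds $k/2$, yet it might contain internal small sets of low set-degree that get peeled first, shrinking it until it can be removed wholesale or vanishes; making this termination/absorption argument rigorous requires a careful potential or induction on the number of vertices, using that a small component $K$ has at most $\binom{|V(K)|}{r} + |S|\binom{\lfloor\frac{k-3}{2}\rfloor}{r-1}$ hyperedges incident to $V(K)$, which is less than $|V(K)|\binom{\lfloor\frac{k-3}{2}\rfloor}{r-1}$ when $|V(K)|$ is between roughly $r$ and $k/2$ (here one uses $k \ge (2+\varepsilon)r + q$ so that $\binom{\lfloor\frac{k-3}{2}\rfloor}{r-1}$ dominates). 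Once that monotone-shrinking argument is in place, the single large surviving component is connected because connectivity of a component is preserved, and it has $\ge n'_{k,r}$ vertices by the counting in the first paragraph, completing the proof of the claim; the thresholds combine as $n''_{k,r} := \max\{n'_{k,r}, \text{(the bound forcing } n-m \ge n'_{k,r})\}$.
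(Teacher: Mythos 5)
Your proposal has two genuine gaps, one in each half of the claim.

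\textbf{Size of $\h'$.} Your bound ``if $m$ vertices are removed in total, at most $m\binom{\lfloor \frac{k-3}{2}\rfloor}{r-1}$ hyperedges are removed'' discards exactly the slack that makes the argument work. Each step removes \emph{strictly} fewer than $|S|\binom{\lfloor \frac{k-3}{2}\rfloor}{r-1}$ hyperedges, so over $j$ steps the total removed is at most $m\binom{\lfloor \frac{k-3}{2}\rfloor}{r-1}-j$, and $j\ge 2m/k$ because each step deletes at most $k/2$ vertices. Equivalently, the potential $|\E(\h')|-|V(\h')|\binom{\lfloor\frac{k-3}{2}\rfloor}{r-1}$ \emph{strictly increases} at every step, which is how the paper phrases it. Without the $-j$ term your final inequality collapses: if $|V(\h')|=n-m$ equals a bounded constant $c$, then your lower bound $|\h'|>(n-m)\binom{\lfloor\frac{k-3}{2}\rfloor}{r-1}-O_{k,r}(1)$ and the trivial upper bound $|\h'|\le\binom{c}{r}$ are both independent of $n$, so there is no contradiction ``for $n$ large''. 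The accumulated $+j$ in the potential is linear in $m$ (hence in $n$ when $|V(\h')|$ is bounded), and that linear growth is what eventually exceeds $\binom{n'_{k,r}}{r}$ and forces $|V(\h')|\ge n'_{k,r}$.

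\textbf{Connectivity of $\h'$.} Your plan to track an invariant (``at most one component of size exceeding $k/2$'') through the peeling process is both more complicated than necessary and, as you acknowledge, not actually completed: a single removal of $E(S)$ can in principle split the current hypergraph into several pieces each exceeding $k/2$ vertices, and nothing in your sketch rules this out or explains why all but one such piece is subsequently consumed. The paper avoids the process entirely and argues about the final $\h'$: by construction every component of $\h'$ satisfies the set degree condition, so Claim~\ref{Contains_cycle} applies to each component and yields a Berge-cycle of length at least $k-4$ inside it. If $\h'$ had two components, the connectivity of $\h$ would give a Berge-path joining the two long cycles, which after unrolling has at least $2k-8$ vertices; for $k\ge 9$ this exceeds $k+1$, contradicting $\B P_k$-freeness. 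This is the missing idea: reuse the earlier structural lemma (long cycles under the set degree condition) rather than reproving a peeling-monotonicity statement from scratch.
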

 
 \begin{proof}
 To see that $\h'$ is connected, observe that every component of $\h'$ possesses the set degree condition. Therefore Claim~\ref{Contains_cycle} yields that every component contains a cycle of length at least $k-4$. Therefore, as $\h$ is connected, $\h$ contains a Berge-path with at least $2k-8$ vertices from two different components of $\h'$, a contradiction as $k\ge 9$.
 
 Suppose to the contrary that  $\h'$ has less than $n'_{k,r}$ vertices. 
 Observe that, by definition of the process, $|\E(\h')|-|V(\h')|\binom{\floor{\frac{k-3}{2}}}{r-1}$ strictly increases at every removal of some set $X$ of at most $k$ vertices. Therefore if $n>n'_{k,r}+k\binom{n'_{k,r}}{r}=n''_{k,r}$ and $|V(\h')|<n'_{k,r}$, then  at the end we would have more hyperedges than those in the complete $r$-uniform hypergraph on $|v(\h')|$ vertices, a contradiction.  
 \end{proof}

 By Claim~\ref{subhyp} and the statement for hypergraphs with the set degree property, we know that  $\h'$ has $n_1\ge n'_{k,r}$ vertices,  and $\h'\subseteq \h_{n_1,\lfloor \frac{k-1}{2}\rfloor}$ if $k$ is odd, and $\h'\subseteq \h_{n_1,\lfloor \frac{k-1}{2}\rfloor,2}$ or $\h^+_{n_1,\floor{\frac{k-1}{2}}}$ if $k$ is even. Then for any hyperedge $h\in \E(\h)\setminus \E(\h')$ that contains at least one vertex from $V(\h)\setminus V(\h')$ with the degree at least two, we can apply Lemma~\ref{structure} (i) to obtain that all such $h$ must meet the $A$ of $\h'$ in $r-1$ vertices. This shows that if the minimum degree of $\h$ is at least 2, then $\h\subseteq \h_{n_2,\lfloor \frac{k-1}{2}\rfloor}$ if $k$ is odd, and $\h\subseteq \h_{n_2,\lfloor \frac{k-1}{2}\rfloor,2}$ or $\h \subseteq \h^+_{n_2,\floor{\frac{k-1}{2}}}$ if $k$ is even, where $n_2\le n$ is the number of vertices that are contained in a hyperedge of $\h$ that is either in $\h'$ or has a vertex in $V(\h)\setminus V(\h')$ with the degree at least 2. This finishes the proof of Theorem~\ref{deg2}.
 
 Finally, consider the hyperedges that contain the remaining $n-n_2$ vertices. As all these vertices are of degree 1, they are partitioned by these edges. For such a hyperedge $h$ let $D_h$ denote the subset of such vertices.  Observe that for such a hyperedge $h$, we have that $h\setminus D_h \subseteq A$. Indeed if $v\in h\setminus (D_h\cup A)$, then there exists a cycle $C$ of length $k-1$ in $\h'$ not containing $v$. Thus there is a path of length at least $k$ starting at an arbitrary $d\in D_h$, continuing with $h,v$, and having $k-1$ more vertices as it goes around $C$ with defining vertices and hyperedges a contradiction. 
 \end{proof}
 
 \textbf{Acknowledgement.} 
We would like to express our sincere appreciation to the anonymous referees for their valuable comments, which significantly improved the quality of the paper's presentation.
 
 Research partially sponsored by the National Research, Development and Innovation Office NKFIH under the grants K 116769, K 132696, KH 130371, SNN 129364, PD 137779 and FK 132060. 
 Research of Gerbner, Nagy and Vizer was supported by the J\'anos Bolyai Research Fellowship of the Hungarian Academy of Sciences. Patk\'os acknowledges the financial support from the Ministry of Educational and Science of the Russian Federation in the framework of MegaGrant no 075-15-2019-1926. 
 Research of Vizer was supported by the New National Excellence Program under the grant number \'UNKP-20-5-BME-45.

\end{document}